\newcommand{\sg}{\sigma}
\newcommand{\fig}[2]{\begin{figure}[ht]
\centerline{\scalebox{.66}{\epsfig{file=#1.eps}}}
\caption{#2}
\label{fig:#1}
\end{figure}}
\def\des{\operatorname{des}}
\def\Des{\operatorname{Des}}
\def\asc{\operatorname{asc}}
\def\Asc{\operatorname{Asc}}
\def\inv{\operatorname{inv}}
\def\Inv{\operatorname{Inv}}
\def\maj{\operatorname{maj}}
\def\Obj{\operatorname{Obj}}
\def\stat{\operatorname{stat}}
\def\coinv{\operatorname{coinv}}
\def\comaj{\operatorname{comaj}}
\def\dist{D}
\def\rlmaj{\operatorname{rlmaj}}
\def\rlcomaj{\operatorname{rlcomaj}}
\def\id{\operatorname{id}}
\newcommand{\qbinom}[2] { {#1 \brack #2}_{q} }
\newcommand{\rmaj}[1] {#1 \hspace{-0.02in} \operatorname{-maj}}
\newcommand{\stir}[2] {S_{#1, #2}}
\newcommand{\qstir}[3] {S_{#1, #2}(#3)}
\newcommand{\setp}[2] {\mathcal{P}_{#1, #2}}
\newcommand{\osp}[2] {\mathcal{OP}_{#1, #2}}
\newcommand{\ospn}[1] {\mathcal{OP}_{#1}}
\newcommand{\eul}[2] {A_{#1, #2}}
\newcommand{\unc}{\operatorname{unc}}
\newcommand{\unca}{\operatorname{unca}}
\newcommand{\uncb}{\operatorname{uncb}}
\newcommand{\can}{\operatorname{can}}
\newcommand{\op}[1] {#1}
\def\S{\mathfrak{S}}
\def\rook{\mathcal{M}}
\def\multiset#1#2{\ensuremath{\left(\kern-.3em\left(\genfrac{}{}{0pt}{}{#1}{#2}\right)\kern-.3em\right)}}
\newcommand{\insertion}[3] {^{#1}_{#2, #3}}
\newcommand{\qmah}[3] {M_{#1, #2}(#3)}
\def\Falls{>^{\prime}}
\newtheorem{thm}{Theorem}[subsection]
\newtheorem{cor}{Corollary}[subsection]
\newtheorem{lemma}{Lemma}[subsection]
\newtheorem{prop}{Proposition}[subsection]
\title[Equidistribution on Ordered Set Partitions]{An Extension of MacMahon's Equidistribution Theorem to Ordered Set Partitions}
\author{Jeffrey B. Remmel and Andrew Timothy Wilson}
\address{Department of Mathematics \newline \indent
University of California, San Diego \newline \indent
La Jolla, CA, 92093-0112, USA}
\email{jremmel@math.ucsd.edu, atwilson@math.ucsd.edu (corresponding author)}
\thanks{The second author is partially supported by the Department of Defense (DoD) through the National Defense Science \& Engineering Graduate Fellowship (NDSEG) Program.}
\begin{document}
\begin{abstract}
We prove a conjecture of Haglund which can be seen as an extension of the equidistribution of the inversion number and the major index over permutations to ordered set partitions. Haglund's conjecture implicitly defines two statistics on ordered set partitions and states that they are equidistributed. The implied inversion statistic is equivalent to a statistic on ordered set partitions studied by Steingr\'{i}msson, Ishikawa, Kasraoui, and Zeng, and is known to have a nice distribution in terms of $q$-Stirling numbers. The resulting major index exhibits a combinatorial relationship between $q$-Stirling numbers and the Euler-Mahonian distribution on the symmetric group, solving a problem posed by Steingr\'{i}msson. 
\end{abstract}

\maketitle

\tableofcontents
\section{Introduction}

Let $\S_{n}$ denote the symmetric group, i.e.\ the group of permutations of 
$\{1, 2, \ldots, n\}$ under composition. Given a 
permutation $\sigma = \sigma_1 \ldots \sigma_n \in \S_{n}$, 
we define the \emph{descent} and \emph{ascent sets} of $\sigma$ to be 
\begin{align*}
\Des(\sigma) &= \{i \in \{1, 2, \ldots, n - 1\} : \sigma_{i} > \sigma_{i+1} \} 
\ \mbox{and} \\
\Asc(\sigma) &= \{i \in \{1, 2, \ldots, n - 1\} : \sigma_{i} < \sigma_{i+1} \} .
\end{align*}
The set of \emph{inversions} of $\sigma$, $\Inv(\sigma)$, is defined by 
\begin{align*}
\Inv(\sigma) &= \{(i, j) : 1 \leq i < j \leq n, ~ \sigma_{i} > \sigma_{j} \} .
\end{align*}
Then 
\begin{equation*}
\Inv^{i, \Box} = \{(i, j): i < j \leq n, ~ \sigma_{i} > \sigma_{j} \}
\end{equation*}
is the set of inversions that start  at position $i$ and 
 \begin{equation*}
\Inv^{\Box, j} = \{(i, j): 1 \leq i < j ,  \sigma_{i} > \sigma_{j} \}
\end{equation*}
is the set of inversions that end at position $j$. 
We let 
\begin{align*}
\des(\sigma) &= |\Des(\sigma)| \ \ \ \ &\inv(\sigma) &= |\Inv(\sigma)| \\
\asc(\sigma) &= |\Asc(\sigma)| \ \ \ \ &\inv^{i, \Box}(\sigma) &= |\Inv^{i, \Box}(\sigma)| \\
\maj(\sigma) &= \sum_{i \in \Des(\sigma)} i \ \ \ \ &\inv^{\Box, j}(\sigma) &= |\Inv^{\Box, j}(\sigma)|.
\end{align*}
$\des(\sg)$, $\asc(\sg)$, $\maj(\sg)$, and $\inv(\sg)$ are known as the \emph{descent number}, \emph{ascent number}, \emph{major index}, and \emph{inversion number} of $\sigma$, respectively.

This paper was motivated by the following conjecture of Jim Haglund: 
\begin{equation}
\label{haglund-conj}
\sum_{\sigma \in \S_{n}} q^{\inv(\sigma)} \prod_{i \in \Des(\sigma)} \left(1 + \frac{z}{q^{1 + \inv^{\Box, i}(\sigma)}} \right) = \sum_{\sigma \in \S_{n}} q^{\maj(\sigma)} \prod_{j = 1}^{\des(\sigma)} \left(1 + \frac{z}{q^{j}} \right).
\end{equation}
We will give a bijective proof of (\ref{haglund-conj}) by viewing 
it as saying that a certain pair of statistics defined on ordered 
set partitions of $\{1, \ldots, n\}$ are equidistributed. In general, 
a statistic on a set of objects $\Obj$ is a map 
$\stat$ from $O$ into the set of natural numbers $\mathbb{N} = \{0,1,2, \ldots \}$. Given a sequence of statistics 
$\stat_{1}, \stat_{2}, \ldots, \stat_{m}$ on $\Obj$, the \emph{distribution} of $(\stat_{1}, \ldots, \stat_{m})$ over $\Obj$ is the polynomial
\begin{align*}
\dist^{\stat_{1}, \ldots, \stat_{m}}_{\Obj}(x_{1}, \ldots, x_{m}) &= \sum_{\sigma \in \Obj} \prod_{i=1}^{m} x_{i}^{\stat_{i}(\sigma)} .
\end{align*}
Then two sequences of statistics $(\stat_{1}, \ldots, \stat_{m})$ on $\Obj$ and $(\stat^{\prime}_{1}, \ldots, \stat^{\prime}_{m})$ on $\Obj^{\prime}$, are said to be \emph{equidistributed} if
\begin{displaymath}
\dist^{\stat_{1}, \ldots, \stat_{m}}_{\Obj}(x_{1}, \ldots, x_{m}) = \dist^{\stat^{\prime}_{1}, \ldots, \stat^{\prime}_{m}}_{\Obj^{\prime}}(x_{1}, \ldots, x_{m}).
\end{displaymath}

Permutation statistics have long played a fundamental role in combinatorics. For example, consider the usual $q$-analogue of $n!$
\begin{align*}
[n]_{q}! &= [1]_{q}[2]_{q}\ldots[n-1]_{q}[n]_{q}
\end{align*}
where $[n]_{q} = 1 + q + \cdots + q^{n-1}$. 
In \cite{macmahon}, originally published in 1915, MacMahon showed that inversion number and major index are equidistributed over $\S_{n}$, and that
\begin{align}
\label{mahonian}
\dist^{\inv}_{\S_{n}}(q) = \dist^{\maj}_{\S_{n}}(q) = [n]_{q}!.
\end{align}
In his honor, any permutation statistic with this distribution over $\S_{n}$ is said to be \emph{Mahonian}.
The equidistribution of inversion number and major index was proved bijectively for the first time by Foata \cite{foata}. Carlitz gave another 
bijection in \cite{carlitz}. 

Clearly setting $z=0$ in (\ref{haglund-conj}) gives (\ref{mahonian}),
so Haglund's conjecture is an extension of (\ref{mahonian}). We shall 
show that  (\ref{haglund-conj}) can be viewed as a statement that two statistics 
$\inv$ and $\maj$ 
on the set of ordered set partitions are equidistributed.  It turns out that the  $\inv$ statistic 
is equivalent to a statistic on ordered set partitions 
studied by Steingr\'{i}msson  \cite{stein}. It follows 
from the work of  Steingr\'{i}msson that the coefficient 
of $z^k$ on the left-hand side of (\ref{haglund-conj}) is 
$[n-k]_q!S_{n,n-k}(q)$ where $S_{n,k}(q)$ is a $q$-analogue 
of the Stirling number of the second kind $S_{n,k}$ which is defined 
by the recursions 
$$S_{n+1,k}(q) = S_{n,k-1}(q)+[k]_qS_{n,k}(q)$$
with initial conditions $S_{0,0}(q) =1$ and $S_{n,k}(q)= 0$ if 
$k < 0$ or $n < k$. The statistic $\maj$ is related to the statistic bmajmil in \cite{stein}, although we will see that our different perspective is quite valuable. Furthermore, we will show that our bijective 
proof of 
(\ref{haglund-conj}) allows us to give to give a bijective 
proof of a combinatorial relationship between the $q$-Stirling numbers and 
a certain distribution on the symmetric group which solves a 
problem posed by Steingr\'{i}msson.

The outline of this paper is as follows.  In Section \ref{sec:insertion}, 
we review Carlitz's insertion method to prove (\ref{mahonian}). 
In particular, we state the key labeling lemmas 
for $\inv$ and $\maj$ for permutations. 
In Section \ref{sec:file}, we give rook theory interpretations 
of $[n]_q!$ and $[n-k]_q!S_{n,n-k}(q)$ and show how those interpretations   
lead to a natural interpretation of the recursions satisfied 
by  $[n-k]_q!S_{n,n-k}(q)$.  In Section \ref{sec:osp}, we 
define extensions of $\inv$ and $\maj$ to the set of 
ordered set partitions and prove analogues 
of the labeling lemmas which allows us to give a bijective 
proof of Haglund's conjecture (\ref{haglund-conj}). In Section \ref{sec:app}, 
we describe several extensions of Haglund's conjecture and 
give a bijective proof of Steingr\'{i}msson's problem.

\section{The Insertion Method}
\label{sec:insertion}
We begin by reviewing a particular bijection on permutations that maps the inversion number to the major index. This bijection is due to work by Carlitz in \cite{carlitz}, and has come to be known as the insertion method, as it involves accounting for the effects of inserting a new largest element into a permutation. 

Let $\sg$ permutation in $\S_{n-1}$. Then there
are $n$ spaces where we can insert $n$ in $\sg$ to obtain a
permutation $\tau \in \S_n$, namely, immediately before
$\sg_1$ or immediately after $\sg_i$ for $i =1, \ldots, n-1$. We are interested in how this insertion affects the inversion number and major index of the permutation. In order to keep track of these changes, we define two labelings of the $n$ spaces in  which we could insert $n$ into $\sg$.

The \emph{inv-labeling} for $\sg$ 
is the labeling obtained by numbering the spaces
from right to left  with $0,1, \ldots, n$. To get the \emph{maj-labeling} for $\sg$, we label the space after $\sg_{n-1}$ with
0, then label the spaces following the descents of
$\sg$ from right to left with $1,2, \ldots, \des(\sg)$, and then
label the remaining spaces from left to right with
$\des(\sg)+1, \ldots, n$. For example, if
$\sg = 14352$, we can write the $\inv$-labeling of the spaces of $\sg$ as subscripts to get
\begin{align} \label{ex:inv-label}_5  1_4 4_3 3_2 5_1 2_0. \end{align}
The $\maj$-labeling of $\sg$ is
\begin{align} \label{ex:maj-label}_3 1_4 4_2 3_5 5_1 2_0. \end{align}

These labels will work together with insertion maps to build permutations in $\S_{n}$. For $n \geq 2$, we define the maps
\begin{align*}
\phi_{\inv, n},  \phi_{\maj, n} : \{0, 1, \ldots, n-1\} \times \S_{n-1} \to \S_{n}
\end{align*} by sending $(i, \sg)$ to the permutation obtained by inserting $n$ in the position labeled $i$ in the $\inv$-labeling (respectively $\maj$-labeling) of $\sg$. For example,
\begin{equation*}
\phi_{\inv, 6}(2, 14352) = 143652 \ \mbox{and} \ 
\phi_{\maj, 6}(2, 14352) = 146352. 
\end{equation*}
Then we have the following two lemmas, which we will call insertion lemmas.
\begin{lemma}\label{lemma:inv}
If $\sg \in \S_{n-1}$, then
$\inv(\phi_{\inv, n}(i, \sg)) = \inv(\sg) +i$.
\end{lemma}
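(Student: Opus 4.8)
The plan is to argue directly from the definition of the inversion number, using the fact that $n$ is the largest element being inserted. Write $\tau = \phi_{\inv, n}(i, \sg)$. I would first observe that inserting $n$ into $\sg$ leaves the relative order of the original entries $1, \ldots, n-1$ completely unchanged: if $\sg_a$ and $\sg_b$ occupy positions $a < b$ in $\sg$, then the corresponding copies of $\sg_a$ and $\sg_b$ still appear in the same left-to-right order in $\tau$. Hence a pair of original entries forms an inversion in $\tau$ if and only if it formed an inversion in $\sg$, so the map $(a,b) \mapsto (\text{shifted positions})$ gives a bijection between $\Inv(\sg)$ and the set of inversions of $\tau$ that do not involve the entry $n$. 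In particular, no old inversions are destroyed and no new inversions among the original entries are created.

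Next I would count the inversions of $\tau$ that do involve $n$. Since $n$ is strictly larger than every other entry, the entry $n$ is the larger element of any inversion it participates in, so $(j, k)$ with $\tau_j = n$ is an inversion of $\tau$ precisely when $k$ is a position to the right of $j$. Thus the number of inversions involving $n$ equals the number of entries lying to the right of $n$ in $\tau$.

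The remaining step is to connect this count to the label $i$. By definition, the $\inv$-labeling numbers the $n$ available spaces from right to left by $0, 1, \ldots, n-1$, so the space labeled $i$ is exactly the space with precisely $i$ entries of $\sg$ to its right. Inserting $n$ there places $n$ with exactly $i$ entries to its right, so $n$ contributes exactly $i$ inversions to $\tau$. Combining this with the previous paragraph, $\inv(\tau) = \inv(\sg) + i$, as claimed.

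There is no serious obstacle here; the argument is essentially a careful bookkeeping of which inversions are preserved and which are created. The only point requiring a little care is verifying that the right-to-left labeling convention really does make the label $i$ coincide with the number of entries to the right of the insertion point, which is immediate once one writes out the labeled spaces as in the example $\sg = 14352$ above.
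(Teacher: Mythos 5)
Your proof is correct and is exactly the straightforward counting argument the paper has in mind: the paper itself gives no proof of this lemma, remarking only that it ``is straightforward to prove.'' Your bookkeeping --- old inversions among $1,\ldots,n-1$ are preserved, and $n$, being maximal, contributes one inversion per entry to its right, which the right-to-left labeling makes equal to the label $i$ --- supplies precisely the omitted details.
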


\begin{lemma}\label{lemma:maj}
If $\sg  \in \S_{n-1}$, then
$\maj(\phi_{\inv, n}(i, \sg)) = \maj(\sg) +i$.
\end{lemma}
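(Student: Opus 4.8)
The plan is to prove the $\maj$-analogue of Lemma~\ref{lemma:inv}, namely that inserting the new largest element $n$ into the space labeled $i$ in the $\maj$-labeling of $\sg$ increases the major index by exactly $i$; I would do this by tracking precisely how the descent set changes under the insertion. Fix $\sg \in \S_{n-1}$ and let $\tau = \phi_{\maj, n}(i, \sg)$, where the space labeled $i$ is the one immediately following $\sg_j$ (with $j = 0$ meaning the space before $\sg_1$ and $j = n-1$ the space after $\sg_{n-1}$). The key local observation is that, because $n$ exceeds every entry of $\sg$, placing $n$ after $\sg_j$ has three effects on descents: it breaks the adjacency of $\sg_j$ and $\sg_{j+1}$, so any descent of $\sg$ at position $j$ is destroyed; it creates a new descent at position $j+1$ of $\tau$ whenever $j < n-1$; and it shifts every descent of $\sg$ at a position larger than $j$ one step to the right.

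Using this, I would compute $\maj(\tau) - \maj(\sg)$ directly, in three cases matching the three clauses of the $\maj$-labeling. If $i$ labels the rightmost space ($j = n-1$), then $n$ becomes the final entry, no descent is created, $\Des(\tau) = \Des(\sg)$, and the change is $0$. If $i$ labels a space following a descent of $\sg$ (so $j \in \Des(\sg)$), then the descent at position $j$ contributing $j$ is destroyed while a descent at position $j+1$ contributing $j+1$ is created, and the $|\{d \in \Des(\sg) : d > j\}|$ descents to the right each shift up by one, so the change is $1 + |\{d \in \Des(\sg) : d > j\}|$. Finally, if $i$ labels one of the remaining spaces (so $j \notin \Des(\sg)$ and $j < n-1$), no descent is destroyed but one is created at $j+1$, giving a change of $(j+1) + |\{d \in \Des(\sg) : d > j\}|$.

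It then remains to check that in each case this change equals the label $i$. The rightmost space carries label $0$, matching the first case. Since the descents are numbered $1, \ldots, \des(\sg)$ from right to left, the space following the descent at position $j$ receives label $1 + |\{d \in \Des(\sg) : d > j\}|$, matching the second case. Since the remaining spaces are numbered $\des(\sg)+1, \ldots, n-1$ from left to right, and exactly $j - |\{d \in \Des(\sg) : d < j\}|$ of them lie to the left of position $j$, the space after $\sg_j$ receives label $\des(\sg) + 1 + j - |\{d \in \Des(\sg) : d < j\}|$; substituting $\des(\sg) = |\{d \in \Des(\sg) : d < j\}| + |\{d \in \Des(\sg) : d > j\}|$ reduces this to $(j+1) + |\{d \in \Des(\sg) : d > j\}|$, matching the third case. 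I expect the only real obstacle to be this last bookkeeping step, where one must reconcile the right-to-left and left-to-right labeling conventions with the counts of descents lying on either side of the insertion point; the descent-tracking itself is routine once the effect of inserting the largest element is isolated.
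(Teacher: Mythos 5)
Your proof is correct. A point of comparison is somewhat moot here, because the paper never actually proves Lemma~\ref{lemma:maj}: it attributes the result to Carlitz \cite{carlitz} and points to \cite{hlr} for a detailed proof of a generalization, and its later proof of the starred generalization (Lemma~\ref{lemma:osp-maj}) in fact \emph{uses} the unstarred statement as a known ingredient (that is where labels like $c+d+1$ for the $\maj$-labeling of $\sg$ get converted into increments of $\maj(\sg)$). So your argument supplies exactly the step the paper takes for granted. Two remarks. First, you silently corrected a typo in the statement: as literally written, with $\phi_{\inv,n}$, the lemma is false (e.g.\ $\maj(\phi_{\inv,6}(2,14352)) = \maj(143652) = 11 \neq 6+2$); the intended map is $\phi_{\maj,n}$, which is how you read it. Second, your three-case analysis---inserting the maximum after $\sg_j$ destroys a descent at $j$ (if there was one), creates a descent at $j+1$ whenever $j<n-1$, and shifts all descents beyond $j$ one step right, after which the change $1+|\{d\in\Des(\sg):d>j\}|$, resp.\ $(j+1)+|\{d\in\Des(\sg):d>j\}|$, resp.\ $0$ is matched against the right-to-left and left-to-right labeling conventions---is the standard proof and is structurally the same case split (after a descent, at the start or after an ascent, at the far right) that the paper carries out for Lemma~\ref{lemma:osp-maj}, minus the extra bookkeeping for moving stars. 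The label arithmetic in your third case, including the reduction of $\des(\sg)+1+j-|\{d<j\}|$ to $(j+1)+|\{d>j\}|$, checks out.
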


Lemma \ref{lemma:inv} is straightforward to prove and
Lemma \ref{lemma:maj} is essentially due to Carlitz. For a detailed proof of a generalization of Lemma \ref{lemma:maj}, see \cite{hlr}. We can use these lemmas to prove MacMahon's Theorem. It is easy to see that 
$\phi_{\inv, n}$ and $\phi_{\maj, n}$ map  $\{1, \ldots, n\} 
\times \S_{n-1}$ onto $\S_n$. Hence 
\begin{align*}
\dist^{\inv}_{\S_{n}}(q) &= \sum_{\tau \in \S_{n}} q^{\inv(\tau)} = \sum_{i=0}^{n-1} \sum_{\sg \in \S_{n-1}} q^{\inv(\phi_{\inv, n}(i, \sg))} \\
&= \sum_{i=0}^{n-1} \sum_{\sg \in \S_{n-1}} q^{\inv(\sg) + i} = [n]_{q} \dist^{\inv}_{\S_{n-1}}(q) = [n]_{q}!
\end{align*}
by induction. The same computation holds for the major index.

This approach also yields a recursive bijection that shows 
that the inversion number and the major index are equidistributed. We define $\psi_{1} : \S_{1} \rightarrow \S_{1}$ to be the identity map and recursively set $\psi_{n} : \S_{n} \rightarrow \S_{n}$ as
\begin{align*}
\psi_{n} = \phi_{\maj, n} \circ (\id, \psi_{n-1}) \circ (\phi_{\inv, n})^{-1}
\end{align*}
for $n \geq 2$, where $\id$ is the identity map. Since $\psi_{n}$ is a composition of bijections, it is also a bijection. Furthermore, Lemmas \ref{lemma:inv} and \ref{lemma:maj} prove that, for any $\tau \in \S_{n}$, $\maj(\psi_{n}(\tau)) = \inv(\tau)$. To see this, write $(\phi_{\inv, n})^{-1}(\tau)$ as $(i, \sigma)$. Then
\begin{align*}
\inv(\tau) &= \inv(\phi_{\inv, n}(i, \sg)) = i + \inv(\sg) \\
&= i + \maj(\psi_{n-1}(\sg)) = \maj( \phi_{\maj, n}(i, \psi_{n-1}(\sg)) ) = \maj(\psi_{n}(\tau))
\end{align*}
by induction.

To compute $\psi_{5}(52143)$, we first compute $(\phi_{\inv, 5})^{-1}(52143)$ by removing $5$ and counting the number of inversions lost by removing 5. In this case, we have lost 4 inversions. We record this number in the $i$ column and the resulting permutation in the $\tau$ column. We repeat this process until we have reached $n = 1$ and filled the first three columns of the table. To build our new permutation, we recursively place $n$ at the position that receives label $i$ in the major index labeling. This process is pictured below. 

\begin{align*}
\begin{array}{l l l l}
n & \tau & i & \psi_{n}(\tau) \\\hline
5 & 52143 & - & 24153 \\
4 & 2143 & 4 & _{2}2_{3}4_{1}1_{\emph{4}}3_{0} \\
3 & 213 & 1 & _{2}2_{\emph{1}}1_{3}3_{0} \\
2 & 21 & 0 & _{2}2_{1}1_{\emph{0}} \\
1 & 1 & 1 & _{\emph{1}}1_{0}
\end{array}
\end{align*}


\section{Rook Theory Interpretations}
\label{sec:file}
In this section, we give rook theory interpretations 
of $[n]_q!$ and $[n-k]_q!S_{n,n-k}(q)$. As we shall see, this point 
of view will be helpful in understanding our extension of 
the insertion method. 

Let $F(b_1, \ldots, b_n)$ be the rook board that has 
$b_i$ cells in the $i$th column for $i =1, 
\ldots, n$. Given a board $B = F(b_1, \ldots, b_n)$, 
let $\mathcal{F}_k(B)$ denote the set of all placements 
of $k$ rooks in $B$ such that there is at most one rook in each 
column.  (In this setting, rooks may share rows.) We will call an element $F \in \mathcal{F}_k(B)$ a 
\emph{file placement} of $k$ rooks in $B$. Let $\mathcal{N}_k(B)$ denote the set of all placements 
of $k$ rooks in $B$ such that there is at most one rook in each row and 
column.  We will call an element $P \in \mathcal{N}_k(B)$ 
a \emph{non-attacking rook placement} of $k$ rooks in $B$. 

Next, we introduce a statistic on these placements.
If $F \in \mathcal{F}_{k}(B)$, we will think of each rook $r$ in 
$F$ as canceling all the cells in its column that lie above 
$r$ plus the cell contains $r$.  Then we let 
$\unc_B(F)$ denote the number of uncanceled cells for $F$ that 
lie below some rook $r \in F$. Similarly, if 
$P \in \mathcal{N}_{k}(B)$, we will think of each rook $r$ in 
$P$ as canceling all the cells in its column that lie above 
$r$ and all the cells that lie in its row to the right of 
$P$ plus the cell that contains $r$.  Then we let 
$\unc_B(P)$ denote the number of uncanceled cells for $P$ that 
lie below some rook $r \in P$. For example, 
in Figure \ref{fig:filerook}, we have pictured a file 
placement $F$ in $B_7 =F(0,1,2,3,4,5,6)$ on the left where we have indicated 
the rooks with circled $X$s and placed dots in the cells 
which do not contain rooks that are canceled by a rook in $F$. 
Similarly, we have pictured a non-attaching 
rook placement $P$ on the right  where we have indicated 
the rooks with $X$s and placed dots in the cells 
which do not contain rooks that are canceled by a rook in $P$.  
In this case, $\unc_{B_7}(F) =3$ because there are three uncanceled cells that 
lie below a rook in $F$ and $\unc_{B_7}(P) =4$ since there are 
four uncanceled cells that lie below a rook in $P$. 

\fig{filerook}{A file placement $F$ and a rook placement $P$ in $B_7$.}

We let $B_n = F(0,1, \ldots, n-1)$ and $St_n = F(1,2,\ldots,n)$ be 
the staircase boards that start with 0 and 1, respectively. 
Let $\mathcal{F}_n = \mathcal{F}_n(St_n)$.  Thus file 
placements of $\mathcal{F}_n$ must have one rook in each column. For 
example, Figure
\ref{fig:file} pictures a file placement $F$ in $\mathcal{F}_6$ 
with 7 uncanceled cells.

\fig{file}{A file placement in $St_7$.}

Note the contribution of 
the rook $r_i$ in the $i$th column to 
$\sum_{F \in \mathcal{F}_n} q^{\unc_{St_n}(F)}$ is clearly, 
$1+q+ \cdots + q^{i-1}=[i]_q$ since placing $r_i$ in the $j$th row 
give $j-1$ uncanceled cells. Thus 
$\dist^{\unc}_{\mathcal{F}_{n}}(q) = [n]_q!$. In other words, the statistic $\unc$ is Mahonian over $\mathcal{F}_n$.

We can set
\begin{align*}
\phi_{\unc, n}: &\{0, 1, \ldots, n-1\} \times \mathcal{F}_{n-1} \rightarrow \mathcal{F}_{n}
\end{align*}
by mapping $(i, F)$ to the rook placement obtained by adding a new column to the right-hand side of $F$ and placing a rook in row $i+1$ in that column. Then we have $\unc(\phi_{\unc, n}(i, F)) = i + \unc(F)$. This allows us to recursively build maps between file placements and permutations that send the statistic $\unc$ to inversion number and major index. That is, we say that $\alpha_{1}$ and $\beta_{1}$ map $\mathcal{F}_{1}$ to $\S_{1}$ in the obvious manner and recursively define
\begin{align*}
\alpha_{n} &= \phi_{\inv, n} \circ (\id, \alpha_{n-1}) \circ (\phi_{\unc, n})^{-1} \\
\beta_{n} &= \phi_{\maj, n} \circ (\id, \beta_{n-1}) \circ (\phi_{\unc, n})^{-1} .
\end{align*}
An example of the construction 
of $\alpha_5(F)$ and $\beta_5(F)$ for an $F \in \mathcal{F}_5$ 
is given in Figure \ref{fig:alphabeta}. 

\fig{alphabeta}{An an example of $\alpha_5(F)$ and $\beta_5(F)$.}

It follows that
\begin{align*}
\inv(\alpha_{n}(F)) &= \unc(F) = \maj(\beta_{n}(F))
\end{align*}
so $\alpha$ sends $\unc$ to $\inv$ and $\beta$ sends $\unc$ to $\maj$. We also note that $\beta_{n} \circ (\alpha_{n})^{-1} = \psi_{n}$; in other words,
one can use file placements in $\mathcal{F}_n$ to define 
the map  $\psi_{n}$.

In \cite{gr-rook}, the authors gave a combinatorial interpretation of the polynomial $\qstir{n}{k}{q}$.  That is, $\qstir{n}{k}{q} = 
\sum_{P \in \mathcal{N}_{n-k}(B_n)} q^{\unc_{B_n}(P)}$. 
Indeed, the recursion
$S_{n+1,k}(q) = S_{n,k-1}(q)+[k]_qS_{n,k}(q)$ classifies 
the rook placements $P \in \mathcal{N}_{n+1-k}(B_{n+1})$ by whether 
or not there is a rook in the last column.  
If we have no rook in the last column, then we have 
$n+1-k$ non-attacking rooks in the first $n$ columns which contribute 
$S_{n,k-1}(q)$ to $\sum_{P \in \mathcal{N}_{n+1-k}(B_{n+1})} 
q^{\unc_{B_{n+1}}(P)}$. 
If there is a rook in the last column, then there are 
$n-k$ non-attacking rooks in the first $n-1$ columns and 
these will cancel $n-k$ cells in the last column which is 
of height $n$. Thus we have $k$ cells in which we can place 
the rook in the last column and we get an extra weight of 
$q^{i-1}$ if we place the rook in the $i$th available cell from 
the bottom. Thus the placements $P \in \mathcal{N}_{n-k}(B_n)$ with 
a rook in the last column contribute $(1+q+ \cdots +q^{k-1})S_{n,k}(q) = 
[k]_qS_{n,k}(q)$ to 
$\sum_{P \in \mathcal{N}_{n+1-k}(B_{n+1})} q^{\unc_{B_{n+1}}(P)}$. 

To give a combinatorial interpretation to 
$[n-k]_q!S_{n,n-k}(q)$, we introduce \emph{mixed 
placements}, which contain both file rooks and non-attacking rooks. 
To our knowledge, these have not received any attention in the literature on rook theory.
Given a board, we wish to place both file rooks and non-attacking rooks in the board. In particular, we insist that there is at most one rook in each column and no rook lies in a cell which is canceled by a rook to its left. If one thinks of starting with a placement of the non-attacking rooks and then ``completes'' this placement by placing the file rooks, avoiding canceled cells, one obtains a mixed placement. An example of this process is pictured in Figure \ref{fig:mixed1}.

\fig{mixed1}{A mixed  placement.}

Formally, we let
$\mathcal{M}_{n,k}$ denote the set of all placements
$P = N \cup F$ where $N$ is a placement of
$k$ non-attacking rooks in $B_n$ and $F$ is a file
placement of $n-k$ rooks in $B_n$ such that
\begin{itemize}
\item no rook in $N$ is in the first row\footnote{We explore what happens when we remove this condition in Section \ref{ssec:beyond}.},
\item there is one rook in each column,
\item each rook $r$ in $N$ cancels the cell it occupies,
all cells in its row that lie to right of $r$, and all
cells in its column that either lie above $r$ or lie in the first
row,
\item each rook $f$ in $F$ cancels the cell it occupies plus all
cells in its column that lie above $f$, and
\item no rook lies in a cell which is canceled by another
rook.
\end{itemize}
Given a placement $P = N \cup F \in \mathcal{M}_{n,k}$, we
let $\unc(P)$ equal the number of uncanceled cells in $P$.
We call the placements $P = N \cup F \in \mathcal{M}_{n,k}$ \emph{mixed rook 
placements} and refer to rooks in $N$ as non-attacking rooks and
the rooks in $F$ as file rooks.
For example, in Figure \ref{fig:mixed}, we have pictured an
element of $P = N \cup F \in \mathcal{M}_{7,3}$ where the
rooks in  $N$ are denoted by $X$s, the elements of
$F$ are denoted by circled $X$s, and the canceled cells are indicated
by placing a dot in them. In this case, $\unc(P) = 9$.

\fig{mixed}{A placement in $\mathcal{M}_{7,3}$.}

We claim that the distribution of uncanceled cells over $\mathcal{M}_{n,k}$ is equal to $[n-k]_{q}! S_{n,n-k}(q)$. To see this, we consider how we can build mixed placements in $\mathcal{M}_{n,k}$. We start with a rook placement
$P \in \mathcal{N}_{k}(B_{n})$. Then we add a row of $n$ cells at the bottom of $B_{n}$
to obtain the board $St_n$.  Each rook 
in $P$ also cancels the cells in this extra row which lies below it. 

At this point, we claim that the number
of uncanceled cells in the columns that do not contain rooks
in $P$ are $1, \ldots, n-k$ as we read from left to right. We prove this by induction on the number of rooks $k$. Clearly, the base case
$k=0$ automatically holds.
Now suppose that our claim is true for all placements of
$k-1$ non-attacking rooks in $B_{n}$. Consider a placement $Q$
of $k$ non-attacking rooks in $B_{n}$. Let $j$ be the column which
contains the rightmost rook in $Q$ and let $P$ be the rook placement
that results from $Q$ by removing the rook in column $j$.
By induction, there is some $\ell$ such that there are
$\ell$ uncanceled cells in column $j$ relative to $P$ and, hence,
the number of uncanceled cells in empty columns to the left
of column $j$ relative to $P$
as we read from left to right are $1,2, \ldots \ell-1$ and the
number of uncanceled cells in columns $j,j+1, \ldots, n$ are
$\ell,\ell+1, \ldots, n-(k-1)$, respectively.  The effect of putting a rook
in column $j$ is to remove one uncanceled cell in each of columns
$j+1, \ldots, n$. Hence, relative to $Q$, the number of uncanceled
cells in columns $j+1, \ldots, n$ will be
$\ell,\ell+1, \ldots, n-k$, respectively, as desired.

It follows that if we consider the uncanceled cells in the empty columns
of $P \in \mathcal{N}_{k}(B_{n-1})$, we have a copy of the board $St_{n-k}$. We fill this embedded board with a file placement, keeping track of the distribution of uncanceled cells over this file placement. Therefore 
\begin{align*}
\dist^{\unc}_{\mathcal{M}_{n, k}}(q) = [n-k]_q! S_{n,n-k}(q) .
\end{align*}

Moreover, it is easy to see that 
$\dist^{\unc}_{\mathcal{M}_{n, 0}}(q) =[n]_q!$ since in that 
case, $\mathcal{M}_{n, 0}$ is just the set of file 
placements in $St_n$.  Similarly, 
$\dist^{\unc}_{\mathcal{M}_{n, n}}(q) =0$ since we can not 
place $n$ non-attacking rooks in $B_n$.  
We also have the following 
recursion when $1\leq k < n$:
\begin{equation} \label{mixedrec}
 \dist^{\unc}_{\mathcal{M}_{n, k}}(q) = [n-k]_q  
\dist^{\unc}_{\mathcal{M}_{n-1, k-1}}(q) + [n-k]_q  
\dist^{\unc}_{\mathcal{M}_{n-1, k}}(q).
\end{equation}
To prove this, we simply classify the placements in 
$\mathcal{M}_{n,k}$ by whether the rook $r_n$ in the last 
column is a file rook or a non-attacking rook. If 
$r_n$ is a file rook, then there are $k$ non-attacking rooks 
in the first $n-1$ columns of $St_n$ and they cancel $k$ 
cells in the last column which is of height $n$.  Hence 
we have $n-k$ cells in which we can place the file rook in the last 
column. In that case, if $r_n$ is placed in the $i$th available 
cell from the bottom, it will contribute a factor of $q^{i-1}  
\dist^{\unc}_{\mathcal{M}_{n-1, k}}(q)$ to $ 
\dist^{\unc}_{\mathcal{M}_{n-1, k}}(q)$. Hence the set of placements 
with a file rook in the last column contributes  
$(1+q+ \cdots +q^{n-k-1}) \dist^{\unc}_{\mathcal{M}_{n-1, k}}(q) = [n-k]_q  
\dist^{\unc}_{\mathcal{M}_{n-1, k}}(q)$ to $ 
\dist^{\unc}_{\mathcal{M}_{n-1, k}}(q)$.  Similarly, 
if there is a non-attacking rook in the last column, then 
there are $k-1$ non-attacking rooks in the first $n-1$ columns. 
Thus there are $n-k+1$ uncanceled cells in the last column, but 
we can not put a rook in the first row since it is a non-attacking 
rook so that are only $n-k$ available cells to place the non-attacking 
rook $r_n$ in the last column. Again, 
if $r_n$ is placed in the $i$th available 
cell from the bottom, it will contribute a factor of $q^{i-1}  
\dist^{\unc}_{\mathcal{M}_{n-1, k-1}}(q)$ to $ 
\dist^{\unc}_{\mathcal{M}_{n-1, k}}(q)$ . Hence the set of placements 
with a file rook in the last column contributes 
$(1+q+ \cdots +q^{n-k-1}) \dist^{\unc}_{\mathcal{M}_{n-1, k-1}}(q) = [n-k]_q  
\dist^{\unc}_{\mathcal{M}_{n-1, k-1}}(q)$ to $ 
\dist^{\unc}_{\mathcal{M}_{n-1, k}}(q)$.

Finally, we observe that elements of $\mathcal{M}_{n,k}$ can naturally 
be identified with an ordered set partition of $\{1, \ldots, n\}$ 
with $n-k$ parts\footnote{This is \emph{not} equal to either of the bijections $\Delta^{>}_{n,k}$ or $\Gamma^{>}_{n,k}$ that we develop in Section \ref{sec:osp}.}. We can think of taking 
$P \in \mathcal{M}_{n,k}$ and decomposing it into an element 
of $N \in \mathcal{N}_k(B_n)$ which comes from the non-attaching rooks in $P$ 
and an element of $F \in \mathcal{F}_{n-k}$ which is determined by 
the file rooks in $P$. For example, we have pictured 
the decomposition of the mixed placement $P$ pictured 
in Figure \ref{fig:mixed} in Figure \ref{fig:mixed2}. The classical 
way to think of an element $N \in \mathcal{N}_k(B_n)$ as a set 
partition $\pi(N)$ of $\{1, \ldots, n\}$ with $n-k$ parts 
is to label the rows with $1, \ldots n$ reading from top to bottom 
and interpreting an $X$ in cell $(i,j)$ as telling us 
that $i$ and $j$ are in the same part. For example, this process 
is pictured on the left in Figure \ref{fig:mixed2}. It is easy to see 
that minimal elements in the set partition correspond to the columns 
that do not contain rooks.  As we described in the last section, 
we can view an element $F \in \mathcal{F}_{n-k}$ as a permutation 
$\sg(F)$ in $\S_{n-k}$ using the $\inv$- (or even $\maj$-)labeling described above. 
Then we can view the pair $(\pi(N),\sg(F))$ as an ordered 
set partition of $\{1, \ldots, n\}$ with $n-k$ parts by ordering the parts 
according the permutation of the minimal elements induced 
by $\sg(F)$. This process is pictured at the bottom 
of Figure \ref{fig:mixed2}. We will develop more maps from mixed rook placements to ordered set partitions in Section \ref{sec:osp}. 

\fig{mixed2}{The ordered set partition associated with the 
mixed placement in Figure 5.}

\section{An Insertion Method for Ordered Set Partitions}
\label{sec:osp}

In this section we prove our main theorem by generalizing Carlitz's insertion method to ordered set partitions.

\subsection{Statistics on Ordered Set Partitions}
\label{ssec:stats}

In our generalization we replace permutations in $\S_n$ with ordered set 
partitions of $\{1, \ldots, n\}$. 
 A \emph{set partition} of an $n$-element set is a partition of the set into nonempty subsets, called \emph{blocks}. An \emph{ordered set partition} is formed by giving an order to the blocks of a partition. For example,
 \begin{align*}
 \{\{1, 4\}, \{2, 3, 7\}, \{5\}, \{6\} \}
 \end{align*}
is a set partition of $\{1, 2, \ldots, 7\}$ with 4 blocks. We can choose to order the blocks in this partition to obtain an ordered set partition in $4! = 24$ ways, one of which results in the ordered set partition
\begin{align*}
\{2,3,7\} , \{6\}, \{1,4\}, \{5\}.
\end{align*}
Instead of using set brackets, it is common to use bars to separate each block, so this ordered set partition is written as either $237|6|14|5$ or $732|6|41|5$, depending on whether we choose to write blocks in ascending or descending order, respectively.We will denote the family of set partitions of $\{1, 2, \ldots, n\}$ into $k$ blocks by $\setp{n}{k}$ and the family of ordered set partitions of $\{1, 2, \ldots, n\}$ into $k$ blocks by $\osp{n}{k}$. We set $\ospn{n} = \bigcup_{k=1}^{n} \osp{n}{k}$. The cardinality of $\setp{n}{k}$ is the Stirling number of the second kind, which we will write $\stir{n}{k}$. Hence $|\osp{n}{k}| = k!\stir{n}{k}$ and it easily follows from the recursions for $\stir{n}{n-k}$ that 
\begin{align*}
|\osp{n}{n-k}| = (n-k) |\osp{n-1}{n-k-1} | + (n-k) | \osp{n-1}{n-k} | .
\end{align*}
One way to combinatorially prove this recursion is to notice that every ordered set partition in $\osp{n}{m-k}$ is formed by adding $n$ to some ordered set partition in $\ospn{n-1}$. In particular, we can either add $n$ as its own block to an element of $\osp{n-1}{n-k-1}$ or we can add $n$ to one of the existing blocks in an element of $\osp{n-1}{n-k}$. Each of these can be done in $n-k$ ways. 

There is another way of thinking of ordered set partitions that will be especially useful for our purposes, namely the concept of an ordered set partition as an ascent-starred or descent-starred permutation. Instead of using bars to signify separations between blocks, we can mark spaces between elements that share a block with stars. For example, $237|6|14|5$ becomes $2_*3_*7~6~1_*4~5$, or, if blocks are written in decreasing order, $732|6|41|5$ becomes $7_*3_*2~6~4_*1~5$.  Thus we have established canonical bijections between ordered set partitions $\osp{n}{k}$ and the sets
\begin{align*}
&\{(\sigma, S): \sigma \in \S_{n}, ~ S \subseteq \Asc(\sg), ~ |S| = n-k \}  \ \mbox{and} \\
&\{(\sigma, S): \sigma \in \S_{n}, ~ S \subseteq \Des(\sg), ~ |S| = n-k \}.
\end{align*}
We will refer to these as \emph{ascent-starred} and \emph{descent-starred permutations}, where the set $S$ gives the positions of the starred ascents (or descents). Our notation for these sets will be
\begin{align*}
\S^{<}_{n, k} &= \{(\sigma, S): \sigma \in \S_{n}, ~ S \subseteq \Asc(\sg), ~ |S| = k \} \ \mbox{and} \\
\S^{>}_{n, k} &= \{(\sigma, S): \sigma \in \S_{n}, ~ S \subseteq \Des(\sg), ~ |S| = k \} .
\end{align*}
For convenience we set
\begin{align*}
\S^{<}_{n} &= \bigcup_{k=0}^{n-1} \S^{<}_{n, k}\ \mbox{and} \  \S^{>}_{n} = \bigcup_{k=0}^{n-1} \S^{>}_{n, k} .
\end{align*}

In order to prove our theorem, we will interpret 
Haglund's conjecture (\ref{haglund-conj}) as a statement 
about the equidistribution of two statistics on descent-starred permutations. 

We start with the inversion side. First, we take some permutation $\sigma \in \S_{n}$ and calculate its inversion number $\inv(\sigma)$. 
Next for  
each $i \in \Des(\sg)$, we must consider the factor 
$1 + \frac{z}{q^{1 + \inv^{\Box, i}(\sigma)}}$. We will think 
of the choice of 1 in this factor as telling us to not star the 
descent $i$ and the choice of $\frac{z}{q^{1 + \inv^{\Box, i}(\sigma)}}$ 
as telling us to star the descent $i$.  
Therefore, for each starred descent at position $i$, we want to subtract
$\displaystyle 1 + \inv^{\Box, i}(\sigma)$ 
from the power of $q$. To account for the $\inv^{\Box, i}(\sigma)$ term, we will ignore all inversions that end at starred descents. We must ignore one more 
inversion for each star. Since each block is decreasing, we know that there is an inversion between any starred element and the rightmost (i.e.\ minimal) element in its block. This is the extra inversion that we will subtract. Therefore the \emph{inversions} of a descent-starred permutation $(\sg, S) \in \S^{>}_{n}$ are
\begin{align*}
\Inv((\sg, S)) &= \{(i, j): 1 \leq i < j \leq n, ~ \sg_{i} > \sg_{j}, ~ j \notin S, ~ \{i, i+1, \ldots, j-1\} \not \subseteq S \}
\end{align*}
and the \emph{inversion number} of $(\sg, S)$ is
\begin{align*}
\op{\inv}((\sg, S)) &= |\Inv((\sg, S))| = \inv(\sg) - \sum_{i \in S} 1 + \inv^{\Box, i}(\sg) .
\end{align*}
For example, if $(\sg, S) = 7_*3_*2~6~4_*1~5$,
\begin{align*}
\op{\Inv}((\sg, S)) &= \{(1, 4), (1, 6), (1, 7), (2, 6), (3, 6), (4, 6), (4, 7) \} \ \mbox{and}\\ 
\op{\inv}((\sg, S)) &= 7 \\ 
 &= \inv(\sg) -  \sum_{i \in S} 1 + \inv^{\Box, i}(\sg) \\ 
 &= \inv(\sg) - \inv^{\Box, 1}(\sg) - \inv^{\Box, 2}(\sg) - \inv^{\Box, 5}(\sg) - 3 \\ 
&= 13 - 0 - 1 - 2 - 3 = 7 .
\end{align*}
It follows that 
\begin{align*}
&\sum_{\sigma \in \S_{n}} q^{\inv(\sigma)} \prod_{i \in \Des(\sigma)} \left(1 + \frac{z}{q^{1 + \inv^{\Box, i}(\sigma)}} \right) = \sum_{k=1}^{n}  \dist^{\op{\inv}}_{\S^{>}_{n, k}}(q) z^{k} .
\end{align*}

Before moving on to the major index side, we note that we can define an inversion statistic for ordered set partitions and ascent-starred permutations by following our canonical bijections from descent-starred permutations to ascent-starred permutations. For an ordered set partition, we observe that our definition of $\op{\inv}$ counts exactly the inversions that are in different blocks where the smaller element is minimal in its block. For example, if  we think of $(\sg, S) = 7_*3_*2~6~4_*1~5$ 
as the ordered set partition $\{2,3,7\},\{6\},\{1,4\},\{5\}$ then 
$6$ contributes 1 inversion, 1 contributes 4 inversions, and 
5 contributes 2 inversions. This point of view makes it clear that our $\op{\inv}$ statistic is exactly equal to the statistic $\operatorname{ros}$ defined in \cite{stein}.

It follows that if we start with an ordered set partition 
$\pi$ of $\{1, \ldots, n\}$ which corresponds to 
$(\sg,S) \in \S_n^{>}$ and $(\tau,T) \in \S_n^{<}$, then 
the corresponding inversions are
\begin{align*}
\op{\Inv}((\sg, T)) &= \{(i, j): 1 \leq i < j \leq n, ~ \sg_{i} > \sg_{j}, ~ j-1 \notin S \} .
\end{align*}
This follows from the fact that the non-minimal elements in each block are all to the right of the minimal element when blocks are increasing.
We obtain the statistic
\begin{align}
\label{ascent-haglund}
\op{\inv}((\sg, T)) &= \inv(\sg) - \sum_{i \in T} \inv^{\Box, i+1}(\sg)
\end{align} on $\S^{<}_{n}$. 
These new statistics are, by definition, equidistributed, i.e.\
\begin{equation*}
\dist^{\op{\inv}}_{\S^{>}_{n, n-k}}(q) = 
\dist^{\op{\inv}}_{\S^{<}_{n, n-k}}(q).
\end{equation*}
Furthermore, we can obtain an expression for ascent-starred permutations much like the inversion side of Haglund's conjecture. This yields the identity
\begin{multline*}
\sum_{\sigma \in \S_{n}} q^{\inv(\sigma)} \prod_{i \in \Des(\sigma)} \left(1 + \frac{z}{q^{1 + \inv^{\Box, i}(\sigma)}} \right) = \\
\sum_{\sigma \in \S_{n}} q^{\inv(\sigma)} \prod_{i \in \Asc(\sigma)} \left(1 + \frac{z}{q^{\inv^{\Box, i+1}(\sigma)}} \right) .
\end{multline*}

Next, we consider the right-hand side of Haglund's conjecture (\ref{haglund-conj}), 
\begin{equation}\label{eq:maj-side}
 \sum_{\sigma \in \S_{n}} q^{\maj(\sigma)} \prod_{j = 1}^{\des(\sigma)} \left(1 + \frac{z}{q^{j}} \right).
\end{equation}
In this case, we shall think of the index $j$ in the
product  $\prod_{j = 1}^{\des(\sigma)} \left(1 + \frac{z}{q^{j}} \right)$
as referring  to the descents of $\sg$ as we read from right to left\footnote{Somewhat surprisingly, we \emph{must} consider descents in this order (not left to right) when dealing with the major index.}.
Again, we think of the choice of $1$ from the factor
$1 + \frac{z}{q^{j}}$ as leaving the $j$th descent (from right to left) unstarred and
the choice of $\frac{1}{zq^{j}}$ from the factor
$1 + \frac{z}{q^{j}}$ as starring this descent. We need our statistic to decrease by $j$ when we star this $j$th descent. One way to accomplish this is to have every star subtract the number of descents weakly to its right. With this in mind, we set the \emph{major index} of a descent-starred permutation to be
\begin{align*}
\op{\maj}((\sg, S)) &= \maj(\sg) - \sum_{i \in S} | \Des(\sg) \cap \{i, i+1, \ldots, n-1\} | .
\end{align*}
Alternatively, we could have every descent subtract the number of stars weakly to its left, i.e.\ 
\begin{align*}
\op{\maj}((\sg, S)) &= \sum_{i \in \Des(\sg)} \left( i - |S \cap \{1, 2, \ldots, i\}| \right) \\
&= \maj(\sg) - \sum_{i \in \Des(\sg)} |S \cap \{1, 2, \ldots, i\}| .
\end{align*}
For example, if $(\sg, S) = 7_*3_*2~6~4_*1~5$, the first definition gives
\begin{align*}
\op{\maj}((\sg, S)) &= \maj(\sg) - \sum_{i \in S} | \Des(\sg) \cap \{i, i+1, \ldots, n-1\} | \\
&= 12 - (4 + 3 + 1) = 4.
\end{align*}
To use the second definition, we associate with $(\sg, S)$ a weakly increasing sequence that increments each time we reach an unstarred position. When $(\sg, S) = 7_*3_*2~6~4_*1~5$, this sequence is $(0, 0, 1, 2, 2, 3, 4)$. Then the major index comes from summing the elements of this sequence that correspond to descents in $\sg$, i.e.\
\begin{align*}
\op{\maj}((\sg, S)) &= 0 + 0 + 2 + 2 = 4.
\end{align*}
This point of view makes it clear that, when $S = \emptyset$, our major index reduces to the usual major index for permutations\footnote{It also shows that our major index is similar to the statistic $\operatorname{bmajmil}$ in \cite{stein}. Our main contribution is our bijection between the major index and inversion number.}.

It follows that 
\begin{align*}
\sum_{\sigma \in \S_{n}} q^{\maj(\sigma)} \prod_{j = 1}^{\des(\sigma)} \left(1 + \frac{z}{q^{j}} \right) &= \sum_{k=1}^{n} 
\dist^{\maj}_{\S^{>}_{n, k}}(q) z^{k}.
\end{align*}

Unlike in the inversion case, there seems to be no natural way to define this statistic on ordered set partitions or ascent-starred permutations. In other words, the only way to extend this $\maj$ statistic to either of these sets is to follow the canonical bijections to descent-starred permutations and apply our definition there.

Thus, we have shown that Haglund's conjecture would follow if we could show that $\op{\inv}$ and $\op{\maj}$ were equidistributed over descent-starred permutations, i.e.\
\begin{align*}
\dist^{\op{\inv}}_{\S^{>}_{n, k}}(q) &= \dist^{\op{\maj}}_{\S^{>}_{n, k}}(q).
\end{align*}
Our next task is to generalize the insertion method to give a bijective proof of this statement.

\subsection{Labelings and Insertion Maps}
\label{ssec:gen-lab}

The goal of our generalized insertion lemmas is to prove 
that $\inv$ and $\maj$ on $\S_n^{>}$ satisfy the recursions 
\begin{align*}
\dist^{\op{\inv}}_{\S^{>}_{n, k}}(q) &= [n-k]_{q} \dist^{\op{\inv}}_{\S^{>}_{n-1, k}}(q) + [n-k]_{q} \dist^{\op{\inv}}_{\S^{>}_{n-1, k-1}}(q) \ \mbox{and} \\
\dist^{\op{\maj}}_{\S^{>}_{n, k}}(q) &= [n-k]_{q} \dist^{\op{\maj}}_{\S^{>}_{n-1, k}}(q) + [n-k]_{q} \dist^{\op{\maj}}_{\S^{>}_{n-1, k-1}}(q) .
\end{align*}
As in the $\S_{n}$ case, we will get a recursive bijection between the two statistics as a result.

There are two ways to obtain an element of $\S^{>}_{n, k}$ from some element of $\S^{>}_{n-1}$. The first is to start with an element of $\S^{>}_{n-1, k}$ and to insert $n$ without adding a new star. This is equivalent to saying that the insertion of $n$ adds a new bar (and a new block) to the associated ordered set partition. We will call this type of insertion a \emph{bar insertion}. The second way to create an element of $\S^{>}_{n, k}$ is to start with an element of $\S^{>}_{n-1, k-1}$ and to add a new star while inserting $n$. We will call this type of insertion a \emph{star insertion}. In our rook theory model of 
ordered set partitions, bar insertion corresponds to adding an extra 
column that contains a file rook and star insertion corresponds to 
an extra column that contains a non-attacking rook.

Now we give an $\op{\inv}$-labeling associated with each type of insertion. Take a descent-starred permutation $(\sg, S) \in \S^{>}_{n-1, k}$. We will only   
label positions which follow an element which is not starred 
plus the position at the start of the descent-starred permutation.  
Like the $\op{\inv}$-labeling for $\S_n$, we will label these positions with $0, 1, \ldots, k+1$ from right to left. For a star insertion, we will follow the same procedure, but we will skip the rightmost position. 

For example, say $(\sg, S) = 5~2_*1~4~7_*6_*3$. The $\op{\inv}$-labeling of a bar insertion is
\begin{align}
\label{ex:inv-bar}
 _{4}5_{3}2_*1_{2}4_{1}7_*6_*3_0
\end{align}
and the $\op{\inv}$-labeling of a star insertion is
\begin{align}
\label{ex:inv-star}
 _{3}5_{2}2_*1_{1}4_{0}7_*6_*3 .
\end{align}

Next we define insertion maps for each type of insertion. For bar insertion, we construct the map
\begin{align*}
\phi \insertion{|}{\op{\inv}}{n, k} : \{0, 1, \ldots, n-k-1\} \times \S^{>}_{n-1, k} \to \S^{>}_{n, k}
\end{align*}
by sending $(i, (\sg, S))$ to the ordered set partition where $n$ has been inserted at the position in $(\sg, U)$ that received  the bar insertion $\op{\inv}$-label $i$. For example, with $(sg, S) = 5~2_*1~4~7_*6_*3$, the labeling in (\ref{ex:inv-bar}) implies
\begin{align*}
\phi \insertion{|}{\op{\inv}}{8, 3} (2, (\sg, S)) &= 5~2_*1~8~4~7_*6_*3 .
\end{align*}
Similarly, 
\begin{align*}
\phi \insertion{*}{\op{\inv}}{n, k} : \{0, 1, \ldots, n-k-1\} \times \S^{>}_{n-1, k-1} \to \S^{>}_{n, k}
\end{align*}
sends $(i, (\sg, S))$ to the ordered set partition where $n$ has been inserted and starred at the position labeled $i$ under the $\op{\inv}$-labeling associated with star insertions. We can use the labeling in (\ref{ex:inv-star}) to get
\begin{align*}
\phi \insertion{*}{\op{\inv}}{8, 4}(2, (\sg, S)) &= 5~8_*2_*1~4~7_*6_*3 .
\end{align*}
The next lemma proves that these labels and insertion maps cooperate.

\begin{lemma}
\label{lemma:osp-inv}
\leavevmode
\begin{itemize}
\item For $(\sg, S) \in \S^{>}_{n-1, k}$, $\op{\inv}(\phi \insertion{|}{\op{\inv}}{n, k}(i, (\sg, S))) = \op{\inv}((\sg, S)) + i$.
\item For $(\sg, S) \in \S^{>}_{n-1, k-1}$, $\op{\inv}(\phi \insertion{*}{\op{\inv}}{n, k}(i, (\sg, S))) = \op{\inv}((\sg, S)) + i$.
\end{itemize}
\end{lemma}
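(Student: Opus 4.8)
The plan is to work with the ordered set partition interpretation of $\op{\inv}$ established earlier in this section rather than grinding directly on the formula $\op{\inv}((\sg,S)) = \inv(\sg) - \sum_{i\in S}\bigl(1 + \inv^{\Box,i}(\sg)\bigr)$, since the latter forces one to track how the positions in $S$ and the quantities $\inv^{\Box,i}(\sg)$ shift as letters are reindexed. Recall that under the canonical bijection $\op{\inv}((\sg,S))$ counts exactly those inversions $(a,b)$ (that is, $a>b$ with $a$ lying in an earlier block than $b$) for which $b$ is the minimal element of its block. Both insertion maps produce an element of $\S^{>}_{n,k}$ by inserting the new largest letter $n$ into the ordered set partition associated to $(\sg,S)$, so the whole problem reduces to computing how this ``inversions into minimal block-elements'' count changes when $n$ is inserted.

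First I would check that inserting $n$ — either as its own new block (bar insertion) or as the head of an existing block (star insertion) — leaves every previously counted inversion intact and creates no new inversion among the old letters. This is immediate: inserting the maximal letter does not reorder any old letters, does not change which block any old letter lies in, does not change the left-to-right order of the old blocks, and (since $n$ exceeds everything) does not disturb the minimal element of any block, including a block that $n$ joins. Hence the change in $\op{\inv}$ equals the number of genuinely new inversions, all of the form $(n,b)$. Counting these is easy: because $n$ is the global maximum it is never the smaller element of an inversion, so it contributes nothing as a minimal block-element (in the bar case $n$ is minimal in its singleton block, but there is no larger letter to pair with it; in the star case $n$ is not minimal at all). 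Thus the new inversions are exactly the pairs $(n,b)$ where $b$ is minimal in a block lying strictly to the right of the block containing $n$, and their number is precisely the number of blocks strictly to the right of $n$'s block, in both the bar and star cases.

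It then remains to identify the label $i$ with this count, which is where the bookkeeping in the two labeling conventions must be matched carefully. For bar insertion the labeled positions are exactly the block boundaries together with the initial position; numbering them from right to left by $0,1,\dots$ assigns to each boundary the number of blocks lying to its right, and inserting $n$ there makes $n$ its own block with exactly that many blocks after it. For star insertion one skips the final position and numbers the positions immediately preceding each block from right to left; inserting and starring $n$ just before a block $B$ makes $n$ the head of $B$, and the label assigned equals the number of blocks strictly after $B$. In both cases $i$ equals the number of blocks to the right of $n$'s block, which by the previous paragraph is the increase in $\op{\inv}$, finishing the proof.

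I expect the only real obstacle to be this last matching step. One must be precise about where $n$ actually lands (a brand-new block versus the head of an existing block), about the right-to-left numbering convention, and about the deliberate omission of the rightmost position in the star case, which reflects exactly the fact that there is no block to the right for $n$ to join. The minimality characterization makes the inversion count itself transparent, so the small off-by-one verification hidden in the star labeling is essentially the entire content of the argument.
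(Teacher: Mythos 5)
Your proof is correct and follows essentially the same argument as the paper: inserting the maximal letter $n$ leaves all existing (starred) inversions unchanged, and the right-to-left labeling is designed precisely so that the label $i$ equals the number of new inversions created, with the star case differing from the bar case only by the one inversion absorbed into $n$'s block. The only cosmetic difference is that you count ``blocks strictly to the right of $n$'s block'' in the ordered set partition picture, while the paper counts ``unstarred elements to the right'' in the descent-starred permutation picture --- these are the same count under the canonical identification of unstarred elements with block minima.
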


\begin{proof}
To prove the first statement, we notice that inserting $n$ at the position that received the label $i$ for bar insertion creates $i$ new inversions (between $n$ and all the unstarred elements to its right) and does not affect any of the previous inversions. The same is true for star insertion, since each star insertion label is one less than the bar insertion label at the same position, and one less inversion is created.
\end{proof}

Furthermore, we claim that these insertion maps give us a unique way of creating every element in $\S^{>}_{n,k}$. That is, $\phi \insertion{|}{\op{\inv}}{n, k}$ and $\phi \insertion{*}{\op{\inv}}{n, k}$ are both injections, and their respective images are
\begin{align*}
\{(\tau, T) \in \S^{>}_{n,k} : n \text{ is not starred in } (\tau, T) \}
\end{align*}
and
\begin{align*}
\{(\tau, T) \in \S^{>}_{n,k} : n \text{ is starred in } (\tau, T) \} .
\end{align*} Clearly the (disjoint) union of these two sets is $\S^{>}_{n,k}$. Therefore $\op{\inv}$ satisfies the following recursion.

\begin{prop}
\label{prop:osp-inv-dist}
The polynomial $\dist^{\op{\inv}}_{\S^{>}_{n,k}}$ equals 1 when $k = n-1$, 0 when $k < 0$ or $k > n-1$, and
\begin{align*}
\dist^{\op{\inv}}_{\S^{>}_{n,k}}(q) &= [n-k]_{q} \dist^{\op{\inv}}_{\S^{>}_{n-1,k}}(q) + [n-k]_{q} \dist^{\op{\inv}}_{\S^{>}_{n-1,k-1}}(q)
\end{align*}
otherwise.
\end{prop}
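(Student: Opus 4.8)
The plan is to establish the two initial conditions by direct inspection and then to obtain the recursion from Lemma \ref{lemma:osp-inv} together with the decomposition of $\S^{>}_{n,k}$ recorded just above the proposition. For the boundary cases, first note that a permutation of $\{1,\dots,n\}$ has at most $n-1$ descents, so if $k<0$ or $k>n-1$ there is no $S\subseteq\Des(\sigma)$ with $|S|=k$; hence $\S^{>}_{n,k}=\emptyset$ and $\dist^{\op{\inv}}_{\S^{>}_{n,k}}(q)=0$. When $k=n-1$, the constraint $|S|=n-1$ forces $\Des(\sigma)=S=\{1,\dots,n-1\}$, so $\sigma$ is the decreasing permutation and every descent is starred; this is the one-block ordered set partition of $\{1,\dots,n\}$. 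Since $\op{\inv}$ counts only inversions whose smaller entry is the minimum of its block and whose two entries lie in distinct blocks, a single-block object has $\op{\inv}=0$, and therefore $\dist^{\op{\inv}}_{\S^{>}_{n,n-1}}(q)=q^{0}=1$.

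For the recursion, I would partition $\S^{>}_{n,k}$ into the set $A$ of pairs in which $n$ is not starred and the set $B$ of pairs in which $n$ is starred. As stated before the proposition, $\phi\insertion{|}{\op{\inv}}{n,k}$ is a bijection from $\{0,\dots,n-k-1\}\times\S^{>}_{n-1,k}$ onto $A$ and $\phi\insertion{*}{\op{\inv}}{n,k}$ is a bijection from $\{0,\dots,n-k-1\}\times\S^{>}_{n-1,k-1}$ onto $B$. Applying Lemma \ref{lemma:osp-inv}, each source $(\sg,S)$ produces images whose $\op{\inv}$ values are $\op{\inv}((\sg,S))+i$ as $i$ runs over $\{0,\dots,n-k-1\}$, so the generating function over $A$ factors as $\bigl(\sum_{i=0}^{n-k-1}q^{i}\bigr)\dist^{\op{\inv}}_{\S^{>}_{n-1,k}}(q)=[n-k]_{q}\dist^{\op{\inv}}_{\S^{>}_{n-1,k}}(q)$, and likewise the sum over $B$ equals $[n-k]_{q}\dist^{\op{\inv}}_{\S^{>}_{n-1,k-1}}(q)$. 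Adding the two contributions gives the asserted recursion.

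The hard part will be justifying that $\phi\insertion{|}{\op{\inv}}{n,k}$ and $\phi\insertion{*}{\op{\inv}}{n,k}$ really are bijections onto $A$ and $B$, which is the only nontrivial input above. I would argue both directions explicitly. In the forward direction, inserting the new maximum $n$ at a labeled position---one following an unstarred entry (or the initial position)---never separates two entries joined by an existing star, so every previously starred position remains a starred descent; the bar insertion leaves $n$ unstarred, while for star insertion the inequality $n>\sg_{j+1}$ guarantees that the freshly placed star lies on a genuine descent, which is precisely why the rightmost position is omitted from the star labeling. In the reverse direction, deleting $n$ from any $(\tau,T)\in\S^{>}_{n,k}$ yields a well-defined element of $\S^{>}_{n-1,k}$ or $\S^{>}_{n-1,k-1}$, according to whether $n$ was starred, and the vacated site carries a unique label, so each map inverts to a single preimage. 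Because ``$n$ starred'' and ``$n$ unstarred'' partition $\S^{>}_{n,k}$, the images $A$ and $B$ are disjoint and cover $\S^{>}_{n,k}$, which is exactly the decomposition used in the recursion.
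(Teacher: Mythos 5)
Your proposal is correct and follows essentially the same route as the paper's proof: decompose $\S^{>}_{n,k}$ according to whether $n$ is starred, identify the two pieces as the images of $\phi\insertion{|}{\op{\inv}}{n,k}$ and $\phi\insertion{*}{\op{\inv}}{n,k}$, and apply Lemma \ref{lemma:osp-inv} to factor out $[n-k]_{q}$ from each piece. Your additional verification of the boundary cases and of the bijectivity of the insertion maps simply fills in details that the paper asserts in the discussion preceding the proposition rather than inside the proof itself.
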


\begin{proof}
It follows from  Lemma \ref{lemma:osp-inv} plus our discussion of 
the properties of $\phi \insertion{|}{\op{\inv}}{n,k}$ and $\phi \insertion{*}{\op{\inv}}{n,k}$ that 
\begin{align*}
\dist^{\op{\inv}}_{\S^{>}_{n,k}}(q) &= \sum_{(\tau, T) \in \S^{>}_{n,k}} q^{\op{\inv}((\tau, T))} \\
&= \sum_{\stackrel{(\tau, T) \in \S^{>}_{n,k}}{n \ \text{is not starred in } (\tau, T)}} q^{\op{\inv}((\tau, T))} +  \sum_{\stackrel{(\tau, T) \in \S^{>}_{n,k}}{n \text{ is starred in } (\tau, T)}} q^{\op{\inv}((\tau, T))} \\
&= \sum_{i=0}^{n-k-1} \sum_{(\sg, S) \in \S^{>}_{n-1, k}} q^{\op{\inv}(\phi \insertion{|}{n}{k}(i, (\sg, S)))} + \\
& \ \ \ \ \sum_{i=0}^{n-k-1} \sum_{(\sg, S) \in \S^{>}_{n-1, k}} q^{\op{\inv}(\phi \insertion{*}{n}{k}(i, (\sg, S)))} \\
&= \sum_{i=0}^{n-k-1} \sum_{(\sg, S) \in \S^{>}_{n-1, k}} q^{\op{\inv}((\sg, S)) + i} + \sum_{i=0}^{n-k-1} \sum_{(\sg, S) \in \S^{>}_{n-1, k-1}} q^{\op{\inv}((\sg, S)) + i} \\
&= [n-k]_{q} \dist^{\op{\inv}}_{\S^{>}_{n-1, k}}(q) + [n-k]_{q} \dist^{\op{\inv}}_{\S^{>}_{n-1, k}}(q) .
\end{align*}
\end{proof}

We want to show that the major index has the same distribution as the inversion number on $\S^{>}_{n, k}$. In order to prove this we rely on the insertion lemma for the major index over the symmetric group, since the only way we know how to calculate the major index of descent-starred permutation $(\sg, S)$ uses the major index of the permutation $\sg$.

As before, we first define labelings that echo the labelings from the symmetric group case. For a bar insertion, we label the rightmost position in our descent-starred permutation with a zero, and then label its unstarred descents from right to left with $1, 2, \ldots$. We label the leftmost position with the next number, and then label the unstarred ascents from left to right with increasing labels. This gives us the $\op{\maj}$-labeling of a bar insertion. For star insertions, we skip the rightmost position and then follow the same procedure.

For example, say $(\sg, S) = 5~2_*1~4~7_*6_*3$. The $\op{\maj}$-labeling of a bar insertion for $(\sg, S)$ is
\begin{align}
\label{ex:maj-bar}
 _{2}5_{1}2_*1_{3}4_{4}7_*6_*3_{0}
\end{align}
and the $\op{\maj}$-labeling of a star insertion for $(\sg, S)$ is
\begin{align}
\label{ex:maj-star}
 _{1}5_{0}2_*1_{2}4_{3}7_*6_*3 .
\end{align}

Now we need to define how to build a new descent-starred permutation after choosing a certain label. This process will be quite different from the process we established for the inversion number. For bar insertion we will define the map
\begin{align*}
\phi \insertion{|}{\op{\maj}}{n, k} : \{0, 1, \ldots, n-k-1\} \times \S^{>}_{n-1,k} \to \S^{>}_{n,k}
\end{align*}
by sending $(i, (\sg, S))$ to the descent-starred permutation obtained from $(\sg, S)$ by
\begin{enumerate}
\item inserting $n$ at the $\op{\maj}$-label $i$ associated with bar insertion, and then
\item moving each star to the right of $n$ one descent to its left.
\end{enumerate}
This second step is well-defined because, after step 1, $n$ will always be an unstarred descent. It also follows that every star that has just moved will remain weakly to the right of the $n$, and the rightmost descent will be unstarred. To obtain
\begin{align*}
\phi \insertion{*}{\op{\maj}}{n, k} : \{0, 1, \ldots, n-k-1\} \times \S^{>}_{n-1,k-1} \to \S^{>}_{n,k}
\end{align*}
we follow steps 1 and 2 and then place a star at the rightmost descent of the resulting descent-starred permutation.

For example, with $(\sg, S) = 5~2_*1~4~7_*6_*3$ as above, (\ref{ex:maj-bar}) gives
\begin{align*}
\phi \insertion{|}{\op{\maj}}{8, 5} (3, (\sg, S)) &= 5~2_*1~8_*4~7_*6~3
\end{align*}
and (\ref{ex:maj-star}) gives
\begin{align*}
\phi \insertion{*}{\op{\maj}}{8, 4} (3, (\sg, S)) =  5~2_*1~4~8_*7_*6_*3 .
\end{align*}

As in the inversion case, we show that these labels and insertion maps cooperate.

\begin{lemma}
\label{lemma:osp-maj}
\leavevmode
\begin{itemize}
\item For $(\sg, S) \in \S^{>}_{n-1, k}$, $\op{\maj}(\phi \insertion{|}{\op{\maj}}{n, k}(i, (\sg, S))) = \op{\maj}((\sg, S)) + i$.
\item For $(\sg, S) \in \S^{>}_{n-1, k-1}$, $\op{\maj}(\phi \insertion{*}{\op{\maj}}{n, k}(i, (\sg, S))) = \op{\maj}((\sg, S)) + i$.
\end{itemize}
\end{lemma}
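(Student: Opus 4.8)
The plan is to reduce both statements to Carlitz's ordinary major-index insertion lemma (Lemma~\ref{lemma:maj}) via the identity $\op{\maj}((\sg,S)) = \maj(\sg) - C(\sg,S)$, where $C(\sg,S) = \sum_{j\in S} |\Des(\sg)\cap\{j,j+1,\ldots\}|$ is the number of pairs consisting of a star $j\in S$ and a descent $d\in\Des(\sg)$ with $d\ge j$. The first step is a dictionary between the $\op{\maj}$-labeling of a bar insertion and the ordinary $\maj$-labeling of $\sg$: both assign labels in the same order (rightmost position, then descents right-to-left, then leftmost position, then ascents left-to-right), so the bar labeling is precisely the ordinary labeling restricted to the unstarred positions and renumbered consecutively. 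Hence the position carrying bar label $i$ carries some ordinary label $\ell$, and inserting $n$ there yields, as a plain permutation, $\tau = \phi_{\maj,n}(\ell,\sg)$, so that $\maj(\tau)=\maj(\sg)+\ell$ by Lemma~\ref{lemma:maj}. Since $\ell$ is the number of all positions with ordinary label below $\ell$ while $i$ is the number of such positions that are unstarred, $\ell - i$ equals the number of \emph{starred} positions whose ordinary $\maj$-label lies below $\ell$.

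The heart of the proof is to establish $C(\tau,T)-C(\sg,S)=\ell-i$, for then Lemma~\ref{lemma:maj} yields
\[
\op{\maj}(\phi\insertion{|}{\op{\maj}}{n,k}(i,(\sg,S))) = \maj(\tau)-C(\tau,T) = \op{\maj}((\sg,S))+\ell-(\ell-i) = \op{\maj}((\sg,S))+i .
\]
I would compute $C(\tau,T)-C(\sg,S)$ star by star, writing each star's contribution as the number of descents weakly to its right and following how the insertion reindexes the star positions and how step~2 slides every star to the right of $n$ one descent to the left. The rightmost position (bar label $0$) is the trivial base case, since appending $n$ changes neither $\maj(\sg)$ nor $C$. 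For $i\ge1$ the insertion creates a new descent just to the right of $n$, and a direct check shows: a star to the left of the insertion position $p$ keeps its contribution when $n$ is inserted at a descent (the descent destroyed at $p$ is replaced by the new descent at $p+1$) but gains $1$ when $n$ is inserted at an ascent or at the front (where $\tau$ acquires a descent to its right without losing one); and every star to the right of $p$ gains exactly $1$, because the reindexing lands it on a descent of $\tau$ without changing its count and the subsequent leftward slide to the adjacent descent raises the count by one. Summing, $C(\tau,T)-C(\sg,S)$ is the number of starred descents to the right of $p$ in the descent case and the total number $k$ of stars in the ascent/front case.

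It then remains to match these totals with $\ell-i$. If $n$ is inserted at a descent $p$, then $\ell=|\Des(\sg)\cap\{p,p+1,\ldots\}|$, and a starred descent has ordinary label below $\ell$ exactly when it lies strictly to the right of $p$, so $\ell-i$ is the number of such stars. If $n$ is inserted at an ascent or at the front, then $\ell$ exceeds every descent label (ascents are labeled only after all descents), so all $k$ stars qualify and $\ell-i=k$. This settles the bar-insertion statement. For the star insertion I would use that the star-insertion labeling is the bar labeling with the rightmost position deleted and all remaining labels lowered by one, so bar-inserting at label $i+1$ (applied with $k-1$ stars) realizes the first two steps of $\phi\insertion{*}{\op{\maj}}{n,k}(i,(\sg,S))$ and raises $\op{\maj}$ by $i+1$; the final act of starring the rightmost descent, which is unstarred after step~2, then adds $1$ to $C$ and hence lowers $\op{\maj}$ by one, for a net increase of $i$.

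The main obstacle is the correction-term bookkeeping in the second paragraph: one must check that the apparently ad hoc rule ``slide each star to the right of $n$ one descent to its left'' is exactly what forces each right-of-$p$ star to gain a single unit, and one must respect the asymmetry between descent and ascent (or front) insertions for the stars to the left of $p$. Keeping the reindexed star positions consistent with the shifted descent set of $\tau$ is the delicate point; once it is organized, the two cases collapse onto the single clean count $\ell-i$.
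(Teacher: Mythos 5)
Your proof is correct and follows essentially the same route as the paper's: both reduce to Carlitz's insertion lemma for the ordinary major index and then track the change in the correction term $\sum_{j \in S} |\Des(\sg) \cap \{j, \ldots, n-1\}|$ through the insertion and the leftward star-slides, with the same case split (right end, unstarred descent, ascent/front) and the same shift-by-one-then-star argument for star insertion. The only real difference is bookkeeping: you package the label comparison as a single dictionary statement ($\ell - i$ counts starred positions with smaller ordinary label), whereas the paper computes both labelings explicitly in each case via the counts of starred/unstarred descents on each side of the insertion point.
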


\begin{proof}
We will focus on the first statement as the second statement will be a simple consequence of the first. 

Let $(\tau,T) = \phi \insertion{|}{\op{\maj}}{n-1,i}(\sg,S)$. 
Recall that 
\begin{align*}
\op{\maj}((\sg, S)) &= \maj(\sg) - \sum_{i \in S} |S \cap \{1, \ldots,i\} | .
\end{align*}

If $i = 0$, we insert $n$ at the far right end. This does not change either term in the above expression, so we have $\op{\maj}(\phi \insertion{|}{\op{\maj}}{n, k}(0, (\sg, S))) = \op{\maj}((\sg, S))$. 

Now suppose that the space labeled $i$ under the maj-labeling 
of $(\sg,S)$ is the space immediately following $\sg_p$ where 
$\sg_p > \sg_{p+1}$ and $\sg_p \not \in S$. 
Suppose that there are $a$ starred descents and $b$ unstarred 
descents to the left of $\sg_p$ and  
$c$ starred descents and $d$ unstarred 
descents strictly to the right of $\sg_p$ in $(\sg,S)$. 
Then the space following 
$\sg_p$ is labeled with $i=d+1$ in our maj-labeling 
of $(\sg,S)$ and it is labeled with $c+d+1$ in 
maj-labeling of $\sg$. 
Thus $\tau$ is  the permutation 
that arises by inserting $n$ immediately after $\sg_p$ 
and $T$ be the set of starred elements that 
is the result of moving the stars on the descents to the right 
$n$ one descent to the left in $\tau$.  
First we claim that  
$$\sum_{i \in \Des(\sg)} |S\cap \{1,\ldots, i\}|= \sum_{i \in \Des(\tau)} |S\cap \{1,\ldots, i\}|.$$
We claim that  before we move the 
stars on the descents to right of $n$ one descent to left, 
the insertion of $n$ does not change 
$\sum_{i \in \Des(\sg)} |S\cap \{1,\ldots, i\}|$. 
That is, before insertion of $n$, $p \in \Des(\sg)$ and 
$|S \cap \{1, \ldots, p\}| =a$ 
while after the insertion of $n$, $p \notin \Des(\tau)$ 
but $p+1 \in \Des(\tau)$ and there will still be $a$ starred elements 
weakly to the left of position $p+1$. The  insertion 
of $n$ does not effect number of starred descents weakly to 
the left for any other descent in $\sg$. Next observe that the effect of 
moving the star on any given descent, one descent to left increases the 
corresponding sum 
$\sum_{i \in \Des(\tau)} |S \cap \{1, \ldots, i\}|$ by one.  Since we 
are moving $c$ stars, we have that 
$c+\sum_{i \in \Des(\sg)} |S \cap \{1, \ldots, i\}|=
\sum_{i \in \Des(\tau)} |T \cap \{1, \ldots, i\}|$.
Hence, 
\begin{eqnarray*}
\maj((\tau,T)) &=& \maj(\tau) - \sum_{i \in \Des(\tau)} |T \cap \{1, \ldots, i\}| \\
&=& 1+c+d+\maj(\sg) - (c + \sum_{i \in \Des(\sg)} |S \cap \{1, \ldots, i\}|)\\
&=& 1+d +\maj(\sg) - \sum_{i \in \Des(\sg)} |S \cap \{1, \ldots, i\}| \\
&=& 1+d +\maj((\sg,S)) = i + \maj((\sg,S)).
\end{eqnarray*}

Next, suppose that the space labeled $i$ under the maj-labeling of 
$(\sg,S)$ is the space $s$ at the start of $(\sg,S)$.  
Assume that are $c$ starred descents 
and $d$ unstarred descents in $(\sg,S)$.  Then, under 
maj-labeling of $(\sg,S)$, $s$ has label $i=d+1$ and, under the 
maj-labeling of $\sg$, $s$ has label $c+d+1$. 
Let $\tau$ be  the permutation 
that arises by inserting $n$ at the start of $(\sg,S)$  
and $T$ be the set of starred elements that 
is the result of moving the stars on the descents to the right 
$n$ one descent to the left in $\tau$.  Then, 
as above, we can argue that 
$c+\sum_{i \in \Des(\sg)} |S \cap \{1, \ldots, i\}|=
\sum_{i \in \Des(\tau)} |T \cap \{1, \ldots, i\}|$ so that 
 $\maj((\tau,T)) = 1+d  + \maj((\sg,S)=i + \maj((\sg,S)$.

Now suppose that the space labeled $i$ under the maj-labeling 
of $(\sg,S)$ is the space following $\sg_p$ where 
$\sg_p < \sg_{p+1}$ so that $\sg_p$ is not starred 
in $(\sg,S)$. Suppose that there are $a$ starred descents 
and $b$ unstarred descents in $(\sg,S)$ strictly to the 
left of $\sg_p$  and $c$ unstarred descents and $d$ starred 
descents in $(\sg,S)$ strictly to the 
right of $\sg_p$.  Then, under the maj-labeling for $(\sg,S)$, the space 
at the start of the permutation is labelled with $1+b+d$ and 
hence the space after $\sg_p$ is labeled with 
$1+b+d +(p-(a+b)) = 1+d-a +p =i$.  Under the $\maj$-labeling for $\sg$,
the space 
at the start of the permutation is labelled with $1+a+b+c+d$ and 
hence the space after $\sg_p$ is labeled with 
$1+a+b+c+d +(p-(a+b)) = 1+c+d +p$.
Then $\tau$ is the permutation 
that arises by inserting $n$ immediately after $\sg_p$ 
and $T$ be the set of starred elements that 
is the result of moving the stars on the descents to the right 
$n$ one descent to the left in $\tau$. 
Before we move the 
stars on the descents to right of $n$ one descent to left, 
the insertion of $n$ does not change the number of 
starred elements weakly to left of any descent in $\sg$. 
However, $n = \tau_{p+1}$ is now a new descent and 
there are $a$ starred element strictly to the left of $n$. 
As before, moving the stars on the $c$ starred descents to the 
right of $n$ one descent to the left gives an addition 
contribution of $c$ to $\sum_{i \in \Des(\tau)} |T \cap \{1, \ldots, i\}|$. 
It follows that
$$a+c + \sum_{i \in \Des(\sg)} |S \cap \{1, \ldots, i\}| = 
\sum_{i \in \Des(\tau)} |T \cap \{1, \ldots, i\}|.$$
Hence
\begin{eqnarray*}
\maj((\tau,T)) &=& \maj(\tau) -  \sum_{i \in \Des(\tau)} |T \cap \{1, \ldots, i\}|\\
&=& 1+c+d+p+\maj(\sg) - 
(a+c + \sum_{i \in \Des(\sg)} |S \cap \{1, \ldots i\}|)\\
&=& 1+d+p +(\maj(\sg) - \sum_{i \in \Des(\sg)} |S \cap \{1, \ldots i\}|) \\
&=& 1+d+p-a +\maj((\sg,S))= i+\maj((\sg,S)).
\end{eqnarray*}

For the second statement in the lemma, we observe that each position which 
receives the label 
$i+1$ during bar insertion receives label $i$ in star insertion 
of $i \geq 0$. Therefore after steps 1 and 2 of the star insertion 
procedure, inserting $n$ into the position labeled $i$ has 
increased the major index by $i+1$. Starring the rightmost descent 
then decreases the major index by 1 so that we get the desired result. 
\end{proof}

As in the $\op{\inv}$ case, this lemma allows us to prove a recursion for the statistic $\op{\maj}$. The images of $\phi \insertion{|}{\op{\maj}}{n,k}$ and $\phi \insertion{*}{\op{\maj}}{n,k}$  are
\begin{align*}
\{(\tau, T) \in \S^{>}_{n, k} : \text{rightmost descent is not starred in } (\tau, T) \}
\end{align*}
and
\begin{align*}
\{(\tau, T) \in \S^{>}_{n, k} : \text{rightmost descent is starred in } (\tau, T) \},
\end{align*}
respectively. As before, the disjoint union of these two sets is $\S^{>}_{n, k}$. Furthermore, both of these maps are injective. To see that $\phi \insertion{*}{\op{\maj}}{n,k}$ is injective, note that its inverse is equal to
\begin{enumerate}
\item removing the star on the rightmost descent,
\item moving every star weakly to the right of $n$ one descent to its right, and
\item removing $n$ and recording $i$ as the difference in $\op{\maj}$ between the beginning ordered set partition and the final ordered set partition.
\end{enumerate}
To calculate the inverse of $\phi \insertion{|}{\op{\maj}}{n,k}$ we just skip step 1.

\begin{prop}
\label{prop:osp-maj-dist}
The polynomial $\dist^{\op{\maj}}_{\S^{>}_{n, k}}$ equals 1 when $k = n-1$, 0 when $k < 0$ or $k > n-1$, and
\begin{align*}
\dist^{\op{\maj}}_{\S^{>}_{n, k}}(q) &= [n-k]_{q} \dist^{\op{\maj}}_{\S^{>}_{n-1, k}}(q) + [n-k]_{q} \dist^{\op{\maj}}_{\S^{>}_{n-1, k-1}}(q)
\end{align*}
otherwise.
\end{prop}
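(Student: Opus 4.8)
The plan is to run the same argument that established Proposition \ref{prop:osp-inv-dist}, with Lemma \ref{lemma:osp-maj} now playing the role that Lemma \ref{lemma:osp-inv} played there. All of the substantive input is already in hand: Lemma \ref{lemma:osp-maj} guarantees that each of the two insertion maps $\phi \insertion{|}{\op{\maj}}{n, k}$ and $\phi \insertion{*}{\op{\maj}}{n, k}$ raises the major index by exactly $i$, and the discussion immediately preceding the statement establishes that these maps are injective with images
\begin{align*}
&\{(\tau, T) \in \S^{>}_{n, k} : \text{the rightmost descent is not starred}\} \ \text{and} \\
&\{(\tau, T) \in \S^{>}_{n, k} : \text{the rightmost descent is starred}\},
\end{align*}
whose disjoint union is all of $\S^{>}_{n, k}$. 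So the proof is essentially pure bookkeeping once these facts are granted.

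First I would dispatch the boundary cases. When $k = n-1$ the only element of $\S^{>}_{n, n-1}$ is the fully decreasing word $\sg = n(n-1)\cdots 1$ with every descent starred (the one-block ordered set partition), and the formula $\op{\maj}((\sg, S)) = \maj(\sg) - \sum_{i \in \Des(\sg)} |S \cap \{1, \ldots, i\}|$ gives $\binom{n}{2} - \binom{n}{2} = 0$, so $\dist^{\op{\maj}}_{\S^{>}_{n, n-1}}(q) = 1$; when $k < 0$ or $k > n-1$ the set is empty and the distribution is $0$. For the recursion I would split the defining sum $\sum_{(\tau, T) \in \S^{>}_{n, k}} q^{\op{\maj}((\tau, T))}$ according to whether the rightmost descent of $(\tau, T)$ is starred. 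By the image description, the unstarred piece is parametrized by pairs $(i, (\sg, S))$ with $0 \leq i \leq n-k-1$ and $(\sg, S) \in \S^{>}_{n-1, k}$ through $\phi \insertion{|}{\op{\maj}}{n, k}$, while the starred piece is parametrized identically but with $(\sg, S) \in \S^{>}_{n-1, k-1}$ through $\phi \insertion{*}{\op{\maj}}{n, k}$. Rewriting each exponent via Lemma \ref{lemma:osp-maj} and factoring the geometric sum $\sum_{i=0}^{n-k-1} q^{i} = [n-k]_{q}$ out of each piece produces the claimed recursion, exactly as in the computation displayed for Proposition \ref{prop:osp-inv-dist}.

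The one place that genuinely requires care, and the only real difference from the inversion case, is the \emph{criterion} used to decompose $\S^{>}_{n, k}$. In the $\op{\inv}$ setting the natural invariant was whether $n$ itself is starred, but since $\phi \insertion{|}{\op{\maj}}{n, k}$ and $\phi \insertion{*}{\op{\maj}}{n, k}$ shift stars leftward, that is no longer preserved; the correct invariant here is whether the \emph{rightmost descent} is starred, and I would make sure the image analysis (in particular the description of the inverse maps via the star-removal and star-shifting steps) correctly matches the two cases to the $\S^{>}_{n-1, k}$ and $\S^{>}_{n-1, k-1}$ terms. Beyond that matching, all the difficulty has already been absorbed into the proof of Lemma \ref{lemma:osp-maj}, so I anticipate no further obstacle in the proposition itself.
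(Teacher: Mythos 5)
Your proposal is correct and follows exactly the paper's own argument: decompose $\S^{>}_{n,k}$ by whether the rightmost descent is starred, use the injectivity and image descriptions of $\phi \insertion{|}{\op{\maj}}{n,k}$ and $\phi \insertion{*}{\op{\maj}}{n,k}$ to parametrize the two pieces, rewrite exponents via Lemma \ref{lemma:osp-maj}, and factor out the geometric sum $[n-k]_q$. You even isolate the same subtlety the paper relies on (the invariant is ``rightmost descent starred,'' not ``$n$ starred,'' because the $\op{\maj}$ insertions move stars), and your explicit check of the boundary cases is a small addition the paper leaves implicit.
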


\begin{proof}
\begin{align*}
\dist^{\op{\maj}}_{\S^{>}_{n, k}}(q) &= \sum_{(\tau, T) \in \S^{>}_{n, k}} q^{\op{\maj}((\tau, T))} \\
&= \sum_{\stackrel{(\tau, T) \in \S^{>}_{n, k}}{\text{rightmost descent is not starred in } (\tau, T)}} q^{\op{\maj}((\tau, T))} + \\ 
&\ \ \ \   \sum_{\stackrel{(\tau, T) \in \S^{>}_{n, k}}{\text{rightmost descent is starred in } (\tau, T)}} q^{\op{\maj}((\tau, T))} \\
&= \sum_{i=0}^{n-k-1} \sum_{(\sg, S) \in \S^{>}_{n-1, k}} q^{\op{\maj}(\phi \insertion{|}{\op{\maj}}{n,k}(i, (\sg, S)))} + \\
&\ \ \ \  \sum_{i=0}^{n-k-1} \sum_{(\sg, S) \in \S^{>}_{n-1, k-1}} q^{\op{\maj}(\phi \insertion{*}{\op{\maj}}{n,k}(i, (\sg, S)))} \\
&= \sum_{i=0}^{n-k-1} \sum_{(\sg, S) \in \S^{>}_{n-1, k}} q^{\op{\maj}((\sg, S)) + i} + \sum_{i=0}^{n-k-1} \sum_{(\sg, S) \in \S^{>}_{n-1, k-1}} q^{\op{\maj}((\sg, S)) + i} \\
&= [n-k]_{q} \dist^{\op{\maj}}_{\S^{>}_{n-1, k}}(q) + [n-k]_{q} \dist^{\op{\maj}}_{\S^{>}_{n-1, k-1}}(q) .
\end{align*}
\end{proof}

We finally have all the ingredients we need to give a bijective proof of Haglund's conjecture. We recursively define our bijection $\psi^{>}_{n, k}$ by setting $\psi^{>}_{1, 0} : \S^{>}_{1,0} \rightarrow \S^{>}_{1,0}$ equal to the identity map and $\psi^{>}_{n, k} : \S^{>}_{n, k} \rightarrow \S^{>}_{n, k}$ on $(\sg, S)$ by
\begin{align*}
\psi^{>}_{n, k}  = \left\{ \begin{array}{ll}
\phi \insertion{|}{\op{\maj}}{n,k} \circ (\id, \psi^{>}_{n-1, k}) \circ (\phi \insertion{|}{\op{\inv}}{n,k})^{-1} & n \text{ is not starred in } (\sg, S) \\
\phi \insertion{*}{\op{\maj}}{n,k} \circ (\id, \psi^{>}_{n-1, k-1}) \circ (\phi \insertion{*}{\op{\inv}}{n,k})^{-1} & n \text{ is starred in } (\sg, S) .
\end{array} \right.
\end{align*}
This bijection takes the inversion number to the major index and preserves the number of blocks in the ordered set partition. Indeed, assume $n$ is not starred in $(\tau, T)$ and set $(i, (\sg, S)) = (\phi \insertion{|}{\op{\inv}}{n,k})^{-1}((\tau, T))$. Then
\begin{align*}
\op{\inv}((\tau, T)) &= \op{\inv}(\phi \insertion{|}{\op{\inv}}{n, k}(i, (\sg, S))) \\
&= i + \op{\inv}((\sg, S)) \\
&= i + \op{\maj}( \psi^{>}_{n-1, k-1}((\sg, S))) \\
&= \op{\maj}( \phi \insertion{|}{\op{\maj}}{n,k} \left( \psi^{>}_{n-1, k-1}((\sg, S)) \right) ) \\
&= \op{\maj}( \psi^{>}_{n, k}((\tau, T))) .
\end{align*}
The argument is essentially the same if $n$ is starred. We can define $\psi^{>}_{n}$ on all of $\S^{>}_{n}$ by applying $\psi^{>}_{n, k}$ to every descent-starred permutation with $k$ stars. 

Similarly, we can inductively define maps 
$\Gamma^{>}_{n,k}:\S^{>}_{n, k} \rightarrow \mathcal{M}_{n,k}$ and \\
$\Delta^{>}_{n,k}:\S^{>}_{n, k} \rightarrow \mathcal{M}_{n,k}$ such 
that for $(\sg,T) \in \S^{>}_{n, k}$,
\begin{eqnarray}
 \inv((\sg,T)) &=& \unc(\Gamma^{>}_{n,k}(\sg,T)) \  \mbox{and} \\
\maj((\sg,T)) &=& \unc(\Delta^{>}_{n,k}(\sg,T)).
\end{eqnarray}
We begin by establishing insertion maps 
\begin{align*}
\phi \insertion{|}{\unc}{n,k} &: \{0, 1, \ldots, n-k-1\} \times \mathcal{M}_{n-1,k} \to \mathcal{M}_{n}{k} \\ 
\phi \insertion{*}{\unc}{n,k} &: \{0, 1, \ldots, n-k-1\} \times \mathcal{M}_{n-1,k-1} \to \mathcal{M}_{n}{k}.
\end{align*}
Fortunately, these maps are quite simple. Say we begin with $i \in \{0, 1, \ldots, n-k-1\}$ and a mixed placement $P$. We append a new column to the right of the board for $P$, creating the staircase board $St_n$. To form $\phi \insertion{|}{\unc}{n,k} ((i, P))$, we place a file rook in the rightmost column so that it has exactly $i$ uncanceled cells below it. To form $\phi \insertion{|}{\unc}{n,k} ((i, P))$, we place a non-attacking rook in the rightmost column so that it has exactly $i$ uncanceled cells below it. It follows from the definition of $\mathcal{M}_{n,k}$ that these maps are well-defined bijections.

Now we can define $\Gamma^{>}_{n,k}$ and $\Delta^{>}_{n,k}$. Both $\Gamma^{>}_{1,0}$ and  $\Delta^{>}_{1,0}$ map 
$(1, \emptyset)$ to the unique placement in $\mathcal{M}_{1,0}$. 
Now suppose that we have defined 
$\Gamma^{>}_{n-1,j}$ and  $\Delta^{>}_{n-1,j}$ 
for all $0 \leq j \leq n-2$.  Then we define $\Gamma^{>}_{n,k}$ on $(\sg,S)$ by
\begin{align*}
\Gamma^{>}_{n,k} &= \left\{ \begin{array}{ll} 
\phi \insertion{|}{\unc}{n,k} \circ (\id, \Gamma^{>}_{n-1,k}) \circ (\phi \insertion{|}{\inv}{n,k})^{-1} & n \text{ is not starred in } (\sg,S) \\
\phi \insertion{*}{\unc}{n,k} \circ (\id, \Gamma^{>}_{n-1,k-1}) \circ (\phi \insertion{*}{\inv}{n,k})^{-1} & n \text{ is starred in } (\sg,S)  \end{array} \right. 
\end{align*} 
With this definition, we can simply set $\Delta^{>}_{n,k} = \Gamma^{>}_{n,k} \circ (\psi^{>}_{n,k})^{-1}$. We present an example of $\Gamma^{>}_{n,k}$ in Figure \ref{fig:bijection}. At this point we have proved the following theorems.

\fig{bijection}{Examples of the $\psi^{>}_n$ and $\Gamma^{>}_n$ bijections}

\begin{thm} For all $n \geq 1$ and $0 \leq k \leq n-1$, 
\begin{equation*}
\dist^{\inv}_{\S^{>}_{n,k}}(q) = \dist^{\maj}_{\S^{>}_{n,k}}(q) = 
\dist^{\unc}_{\mathcal{M}_{n,k}}(q) = [n-k]_q!S_{n,n-k}(q).
\end{equation*}
\end{thm}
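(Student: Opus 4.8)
The plan is to observe that all four quantities in the statement obey one and the same two-term recursion with the same boundary conditions, so the theorem collapses to a short induction. First I would record that the rightmost equality, $\dist^{\unc}_{\mathcal{M}_{n,k}}(q) = [n-k]_q! S_{n,n-k}(q)$, has already been established in Section \ref{sec:file}: the recursion (\ref{mixedrec}) together with the boundary evaluations $\dist^{\unc}_{\mathcal{M}_{n,0}}(q) = [n]_q!$ and $\dist^{\unc}_{\mathcal{M}_{n,n}}(q) = 0$ determines the distribution, and it was checked there to equal $[n-k]_q! S_{n,n-k}(q)$.

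Next I would note that Propositions \ref{prop:osp-inv-dist} and \ref{prop:osp-maj-dist} give precisely the recursion
\begin{align*}
\dist^{\op{\inv}}_{\S^{>}_{n,k}}(q) &= [n-k]_{q} \dist^{\op{\inv}}_{\S^{>}_{n-1,k}}(q) + [n-k]_{q} \dist^{\op{\inv}}_{\S^{>}_{n-1,k-1}}(q),
\end{align*}
and the identical recursion for $\op{\maj}$, with both polynomials sharing the boundary values $\dist = 1$ when $k = n-1$ and $\dist = 0$ when $k < 0$ or $k > n-1$. Since this is the same recursion as (\ref{mixedrec}) (the conventions agree: $k$ stars correspond to $k$ non-minimal elements, i.e.\ $k$ non-attacking rooks, leaving $n-k$ blocks), a single induction on $n$ forces $\dist^{\op{\inv}}_{\S^{>}_{n,k}} = \dist^{\op{\maj}}_{\S^{>}_{n,k}} = \dist^{\unc}_{\mathcal{M}_{n,k}}$. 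To close the loop I would verify directly that $[n-k]_q! S_{n,n-k}(q)$ satisfies this recursion, which is immediate from the defining recursion $S_{n,m}(q) = S_{n-1,m-1}(q) + [m]_q S_{n-1,m}(q)$ taken at $m = n-k$: the factor $[n-k]_q$ absorbs into $[n-1-k]_q!$ on one term and pairs with $[n-k]_q!$ on the other, so the two terms collapse to $[n-k]_q!\,\bigl(S_{n-1,n-k-1}(q) + [n-k]_q S_{n-1,n-k}(q)\bigr) = [n-k]_q! S_{n,n-k}(q)$.

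Little genuine obstacle remains at this stage, since the real content of the argument already lives in Lemmas \ref{lemma:osp-inv} and \ref{lemma:osp-maj} and the two distribution propositions. If I preferred a purely bijective proof rather than invoking uniqueness of solutions to the recursion, I would instead cite the explicit maps already constructed: $\psi^{>}_{n,k}$ carries $\op{\inv}$ to $\op{\maj}$ on $\S^{>}_{n,k}$, while $\Gamma^{>}_{n,k}$ and $\Delta^{>}_{n,k}$ carry $\op{\inv}$ and $\op{\maj}$, respectively, to $\unc$ on $\mathcal{M}_{n,k}$, so the three bijections directly witness the whole chain of equalities. The only point deserving any care is the bookkeeping of the boundary cases and checking that the Stirling index $n-k$ lines up with the number of blocks throughout, but this is entirely routine.
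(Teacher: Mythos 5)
Your proposal is correct and takes essentially the same route as the paper: the paper also derives the theorem by combining the rook-theoretic evaluation $\dist^{\unc}_{\mathcal{M}_{n,k}}(q) = [n-k]_q!S_{n,n-k}(q)$ from Section \ref{sec:file} with the matching recursions of Propositions \ref{prop:osp-inv-dist} and \ref{prop:osp-maj-dist} and the recursion (\ref{mixedrec}), while the explicit bijections $\psi^{>}_{n,k}$, $\Gamma^{>}_{n,k}$, and $\Delta^{>}_{n,k}$ that you cite as an alternative are exactly the bijective witnesses the paper constructs immediately before stating the theorem. Both your induction-on-recursions argument and your bijective fallback are faithful to the paper's reasoning, including the boundary-condition bookkeeping.
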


\begin{thm}\label{thm:main} For all $n \geq 1$, 
\begin{align*}
\sum_{k=0}^{n-1} \dist^{\inv}_{\S^{>}_{n,k}}(q) z^{k} &= \sum_{k=0}^{n-1} \dist^{\maj}_{\S^{>}_{n,k}}(q) z^{k} =\sum_{k=1}^n [k]_q! S_{n,k}(q) z^{n-k}\\
&=\sum_{\sigma \in \S_{n}} q^{\inv(\sigma)} \prod_{j \in \Des(\sigma)} \left(1 + \frac{z}{q^{1 + \inv^{\Box, j}(\sigma)}} \right) \\
&=\sum_{\sigma \in \S_{n}} q^{\inv(\sigma)} \prod_{j \in \Asc(\sigma)} \left(1 + \frac{z}{q^{\inv^{\Box, j}(\sigma)}} \right) \\
&= \sum_{\sigma \in \S_{n}} q^{\maj(\sigma)} \prod_{j = 1}^{\des(\sigma)} \left(1 + \frac{z}{q^{j}} \right).
\end{align*}
\end{thm}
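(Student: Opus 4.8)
The statement is a chain of six expressions joined by five equalities, and the plan is to establish each link separately, organizing the argument around the central polynomial $\sum_{k=1}^n [k]_q! S_{n,k}(q) z^{n-k}$ and reading the chain outward from there. The two leftmost expressions are generating functions over descent-starred permutations, the three rightmost are the Haglund-type products, and the middle is the $q$-Stirling form that the rook theory of Section \ref{sec:file} supplies.

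For the first link, $\sum_k \dist^{\inv}_{\S^{>}_{n,k}}(q) z^k = \sum_k \dist^{\maj}_{\S^{>}_{n,k}}(q) z^k$, I would argue termwise: it suffices to prove $\dist^{\inv}_{\S^{>}_{n,k}}(q) = \dist^{\maj}_{\S^{>}_{n,k}}(q)$ for each $k$. This is immediate from Propositions \ref{prop:osp-inv-dist} and \ref{prop:osp-maj-dist}, which show that both polynomials obey the identical recursion $f_{n,k} = [n-k]_q f_{n-1,k} + [n-k]_q f_{n-1,k-1}$ with identical boundary data ($f_{n,n-1} = 1$ and $f_{n,k} = 0$ for $k < 0$ or $k > n-1$); a one-line induction on $n$ forces equality. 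Equivalently, the explicit bijection $\psi^{>}_{n,k}$ assembled from the two insertion lemmas carries $\op{\inv}$ to $\op{\maj}$ while preserving the block count, which gives the same conclusion bijectively.

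For the second link, reaching $\sum_{k=1}^n [k]_q! S_{n,k}(q) z^{n-k}$, I would identify the common polynomial with $[n-k]_q! S_{n,n-k}(q)$. Section \ref{sec:file} computes $\dist^{\unc}_{\mathcal{M}_{n,k}}(q) = [n-k]_q! S_{n,n-k}(q)$, and recursion (\ref{mixedrec}) shows this distribution satisfies exactly the recursion above with the same boundary data, so $\dist^{\inv}_{\S^{>}_{n,k}} = \dist^{\maj}_{\S^{>}_{n,k}} = \dist^{\unc}_{\mathcal{M}_{n,k}} = [n-k]_q! S_{n,n-k}(q)$ (the bijections $\Gamma^{>}_{n,k}$ and $\Delta^{>}_{n,k}$ make this identification explicit). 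Substituting into $\sum_{k=0}^{n-1} \dist^{\maj}_{\S^{>}_{n,k}}(q) z^k$ and reindexing by $m = n-k$, which sends $k = 0, \ldots, n-1$ to $m = n, \ldots, 1$, converts the sum into $\sum_{m=1}^n [m]_q! S_{n,m}(q) z^{n-m}$, the claimed middle form.

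The three remaining expressions require no new work: they are precisely the generating functions unpacked in Section \ref{ssec:stats}. Expanding the descent product and reading the power of $z$ as the number of starred descents gives $\sum_{\sigma} q^{\inv(\sigma)} \prod_{j \in \Des(\sigma)} (1 + z/q^{1+\inv^{\Box,j}(\sigma)}) = \sum_k \dist^{\inv}_{\S^{>}_{n,k}}(q) z^k$; the analogous expansion of the major-index product gives $\sum_{\sigma} q^{\maj(\sigma)} \prod_{j=1}^{\des(\sigma)}(1 + z/q^j) = \sum_k \dist^{\maj}_{\S^{>}_{n,k}}(q) z^k$; and the canonical descent-to-ascent bijection identifies the ascent product with the descent product. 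Chaining these closes the loop. The genuinely hard input underlying the whole theorem is Lemma \ref{lemma:osp-maj}: checking that the $\op{\maj}$-insertion maps — especially the step that shifts each star lying to the right of the freshly inserted $n$ one descent to the left — change $\op{\maj}$ by exactly $i$. That star-shifting bookkeeping is the delicate obstacle; once it is secured, the theorem itself is pure assembly.
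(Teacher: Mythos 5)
Your proposal is correct and follows essentially the same route as the paper: the equidistribution of $\op{\inv}$ and $\op{\maj}$ via the matching recursions of Propositions \ref{prop:osp-inv-dist} and \ref{prop:osp-maj-dist} (equivalently the bijection $\psi^{>}_{n,k}$ built from the insertion lemmas), the identification with $[n-k]_q!\,S_{n,n-k}(q)$ through the mixed rook placements $\mathcal{M}_{n,k}$ and recursion (\ref{mixedrec}), and the interpretation of the three product expressions as generating functions over starred permutations from Section \ref{ssec:stats}. You also correctly single out Lemma \ref{lemma:osp-maj} (the star-shifting bookkeeping in $\op{\maj}$-insertion) as the genuinely hard ingredient, which matches the paper's structure.
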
 

It is worth pointing out that $\psi^{>}$ has a nice property that it shares with Carlitz's bijection on $\S_n$. In any word $\sg$, the \emph{right-to-left minima} of $\sg$ are the entries $\sg_i$ such that, for all $j>i$, $\sg_i < \sg_j$. In other words, they are the entries one marks if one scans $\sg$ from right to left, marking an entry each time it is smaller than all previous entries one has observed. We say that the right-to-left minima of a descent- or ascent-starred permutation are simply the right-to-left minima of the underlying permutation. 

\begin{cor}
\label{cor:right-to-left-min}
For any $(\sg,S) \in \S^{>}_{n}$, $(\sg,S)$ and $\psi^{>}_{n}((\sg,S))$ have the same set of right-to-left minima. As a result, they have the same rightmost entry.
\end{cor}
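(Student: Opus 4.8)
The plan is to induct on $n$ and exploit the recursive definition of $\psi^{>}_{n,k}$, reducing everything to one elementary fact about how the largest letter $n$ interacts with right-to-left minima. That fact is a dichotomy: for a permutation whose largest entry is $n$, the entry $n$ is a right-to-left minimum precisely when it occupies the last position. Consequently, if $n$ is \emph{not} last in $(\sg,S)$, then deleting $n$ leaves the set of right-to-left minima unchanged, because no entry smaller than $n$ can lose or gain right-to-left-minimum status when a larger entry to its right is removed; whereas if $n$ \emph{is} last, then $n$ is itself a right-to-left minimum and its deletion removes exactly $n$ from the set while fixing all the others. I would record this as a short preliminary claim, noting that since right-to-left minima depend only on the underlying permutation, starring or unstarring positions is irrelevant to them.

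The crux is to show that $\psi^{>}_{n,k}$ preserves whether or not $n$ occupies the last position. I would argue this from the common structure of the two labelings: in both the $\op{\inv}$- and $\op{\maj}$-labelings for bar insertion the rightmost space carries label $0$, so inserting $n$ at label $0$ puts $n$ last and any label $i \geq 1$ puts $n$ strictly before the end. Since $\psi^{>}_{n,k}$ extracts a pair $(i,(\sg',S'))$ through $(\phi\insertion{|}{\op{\inv}}{n,k})^{-1}$ and then reinserts $n$ with the \emph{same} label $i$ through $\phi\insertion{|}{\op{\maj}}{n,k}$, an unstarred $n$ is last in $(\sg,S)$ if and only if $i=0$ if and only if it is last in $\psi^{>}_{n,k}((\sg,S))$. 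In the starred case there is nothing to check: a starred $n$ is a descent top, hence never last, on both the input and the output (the star-insertion labelings skip the rightmost space), so $n$ is non-last on both sides. The star movements performed inside $\phi\insertion{|}{\op{\maj}}{n,k}$ do not disturb this, as they alter only the star set and not the positions of letters.

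With these two ingredients the induction is immediate. The base case $n=1$ holds because $\psi^{>}_{1,0}$ is the identity. For the step, set $(i,(\sg',S')) = (\phi\insertion{|}{\op{\inv}}{n,k})^{-1}((\sg,S))$ (or its starred analogue), so that $(\sg',S')$ is $(\sg,S)$ with $n$ removed, and put $(\tau',T') = \psi^{>}_{n-1,\cdot}((\sg',S'))$, which by the inductive hypothesis shares its right-to-left minima with $(\sg',S')$. If $n$ is non-last on both sides, the dichotomy says deletion and reinsertion each preserve right-to-left minima, so $(\sg,S)$ and $\psi^{>}_{n,k}((\sg,S))$ both have the right-to-left minima of $(\sg',S')=(\tau',T')$. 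If $n$ is last on both sides, the dichotomy says each side's right-to-left minima are $\{n\}$ adjoined to those of $(\sg',S')$ and $(\tau',T')$ respectively, which coincide by induction. Either way the two sets agree.

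Finally, the rightmost-entry statement is a free consequence: scanning a permutation from right to left produces a strictly decreasing sequence of right-to-left minima, so the rightmost entry is exactly the maximum of the set of right-to-left minima. Equal sets of right-to-left minima therefore have equal maxima, forcing equal rightmost entries. I expect the only point demanding care to be the middle step---pinning down that the shared label $i$ really does control last-position membership on both sides, including the benign effect of the skipped rightmost slot in star insertion---after which the induction and the final deduction are routine.
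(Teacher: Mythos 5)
Your proof is correct and takes essentially the same approach as the paper's: induction on $n$ through the recursive definition of $\psi^{>}_{n,k}$, combined with the dichotomy that $n$ is a right-to-left minimum precisely when it is the last entry, and the observation that label $0$ in bar insertion corresponds to the rightmost space under both the $\op{\inv}$- and $\op{\maj}$-labelings (while star insertions never place $n$ last on either side). The paper phrases the key step as a two-case argument (with the second case ruled out by reference to the first) rather than as a preserved invariant, but the content is identical, and your explicit justification of the rightmost-entry claim via maxima of the minima sets is merely a detail the paper leaves implicit.
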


\begin{proof}
We will prove this fact by induction on $n$. The base case holds trivially. The induction hypothesis is that the statement holds for all values less than $n$. First, we examine the case where $\sg_n = n$. Then the right-to-left minima are the right-to-left minima of $\sg_{1}\ldots\sg_{n-1}$ together with $n$. Let us examine how we create $\psi^{>}_{n}((\sg,S))$. First, we remove $n$ and notice that we have lost zero inversions. Then we apply $\psi^{>}_{n-1}$ to the resulting descent-starred permutation. By induction, this preserves right-to-left minima. Finally, we would like to insert $n$ so that we do not increase the major index or add any new stars. From Lemma \ref{lemma:osp-maj}, we see that the only way to do this is to place $n$ as the new rightmost entry, moving no stars. Therefore we end with the same set of right-to-left minima. 

Now we assume that $\sg_n < n$. In this case, $n$ cannot be a right-to-left minimum of $\sg$, so the right-to-left minima of $\sg$ are exactly the right-to-left minima of $\sg_1 \ldots \sg_{n-1}$. To perform $\psi^{>}_{n}$, we begin by removing $n$ (and the star that might follow it). Then we apply $\psi^{>}_{n-1}$. By induction, $\psi^{>}_{n-1}$ does not alter the set of right-to-left minima. Finally, we insert $n$ according to the maj-labeling. We note that the only way maj-labeling inserts $n$ as the new rightmost entry is if we are performing a bar insertion without increasing the major index. This cannot occur, as it falls under the previous case. Therefore, $n$ is not the rightmost entry in $\psi^{>}_{n}((\sg,S))$, so we have preserved the right-to-left minima.
\end{proof}

\section{Applications and Extensions}
\label{sec:app}
In this section, we explore some consequences of the results obtained in Section \ref{sec:osp}. First, we address a question of Steingr\'{i}msson by proving an alternate form for the distribution of our major index. We extend our methods to other Mahonian statistics, specifically $\coinv$, $\comaj$, $\rlmaj$, and $\rlcomaj$,  in Subsection \ref{ssec:extensions}. We show that 
there are natural $p,q$-analogues of Theorem \ref{thm:main}, in Subsection 
\ref{ssec:pq}. Slightly altering the mixed rook placements of Section 2 leads to new identities in Subsection \ref{ssec:beyond}. Lastly, we mention some future directions of research.

\subsection{Connection to the Euler-Mahonian Distribution}
\label{ssec:stein}
In \cite{stein}, Steingr\'{i}msson explored the distribution $q^{\binom{k}{2}} [k]_{q}! \qstir{n}{k}{q}$ on $\osp{n}{k}$, which he called the \emph{Euler-Mahonian distribution}. He hoped his work would lead to a combinatorial proof of a certain identity. We will show that our major index completes the work begun by Steingr\'{i}msson.

We first need to define the $q$-binomial coefficients
\begin{align*}
\qbinom{n}{k} = \frac{[n]_{q}!}{[k]_{q}! [n-k]_{q}!}
\end{align*}
for $0 \leq k \leq n$. The joint distribution of $\maj$ and $\des$ on $\S_{n}$, i.e.\ $\dist^{\maj, \des}_{\S_{n}}(q, t)$, is known as the \emph{Euler-Mahonian distribution on $\S_{n}$}, and it is standard to write the coefficient of $t^{k}$ in this distribution as $\eul{n}{k}(q)$. It was proved analytically in \cite{zeng-zhang} that
\begin{align}
\label{stein}
q^{\binom{k}{2}} [k]_{q} ! \stir{n}{k}(q) &= \sum_{i=1}^{k} q^{k(k-i)} \qbinom{n-i}{k-i} \eul{n}{i-1}(q) .
\end{align}
Steingr\'{i}msson hoped to come up with a statistic on $\osp{n}{k}$ along with combinatorial proofs that this statistic's distribution was given by each side of the identity. Although he, along with the authors of \cite{osp1, osp2}, proved that many statistics have distributions given by one of the two sides of the equality, they were not able to find a statistic that exhibited both sides. 

Statement (\ref{stein}) is clearly equivalent to
\begin{align}
\label{stein-mah}
[k]_{q} ! \qstir{n}{k}{q} &=  \sum_{i=1}^{k} q^{k(k-i) - \binom{k}{2}} \qbinom{n-i}{k-i} \eul{n}{i-1}(q) .
\end{align}
We proved in Section 4 that
\begin{align*}
\dist^{\op{\maj}}_{\S^{>}_{n, n-k}}(q) &= [k]_{q}! \qstir{n}{k}{q} .
\end{align*}
In this section we prove that the distribution of our major index can also be given by the right-hand side of (\ref{stein-mah}), solving the problem posed by Steingr\'{i}msson.

First, we rewrite
\begin{align*}
A_{n, i-1}(q) &= q^{in - \binom{n+1}{2}} A_{n, n-i}(q) .
\end{align*}
This equality comes from examining how reversing a permutation affects its major index; see \cite{stein} for a full proof. Next, we use the fact that
\begin{align*}
k(k-i) - \binom{k}{2} + in - \binom{n+1}{2} = \binom{n-k}{2} - (n-k)(n-i)
\end{align*}
which can be verified by a straightforward computation. Finally, we use the symmetry of $q$-binomial coefficients to write
\begin{align*}
\qbinom{n-i}{k-i} = \qbinom{n-i}{n-k}
\end{align*}
As a result, (\ref{stein}) is equivalent to the following proposition.

\begin{prop}
\label{prop:stein}
\begin{align*}
\dist^{\op{\maj}}_{\S^{>}_{n, n-k}}(q) &= \sum_{i=1}^{k} q^{\binom{n-k}{2} - (n-k)(n-i)} \qbinom{n-i}{n-k} \eul{n}{n-i}(q).
\end{align*}
\end{prop}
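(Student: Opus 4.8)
The plan is to compute the left-hand side $\dist^{\op{\maj}}_{\S^{>}_{n,n-k}}(q)$ directly and match it term-by-term with the right-hand side, thereby exhibiting $\op{\maj}$ as a single statistic realizing both sides of \eqref{stein-mah}. The organizing idea is to classify each descent-starred permutation $(\sg,S)\in\S^{>}_{n,n-k}$ by the descent number $\des(\sg)$ of its underlying permutation. Since $S\subseteq\Des(\sg)$ and $|S|=n-k$, we must have $n-k\le\des(\sg)\le n-1$; writing $\des(\sg)=n-i$ forces $i$ to range over $\{1,\dots,k\}$, which is exactly the index set appearing on the right-hand side. So I would first split
\[
\dist^{\op{\maj}}_{\S^{>}_{n,n-k}}(q)=\sum_{i=1}^{k}\ \sum_{\substack{\sg\in\S_n\\ \des(\sg)=n-i}}\ \sum_{\substack{S\subseteq\Des(\sg)\\ |S|=n-k}} q^{\op{\maj}((\sg,S))}.
\]

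Next, for a fixed $\sg$ with $\des(\sg)=n-i$, I would evaluate the inner sum over $S$ using the first description $\op{\maj}((\sg,S))=\maj(\sg)-\sum_{j\in S}|\Des(\sg)\cap\{j,\dots,n-1\}|$. Listing $\Des(\sg)=\{d_1<\cdots<d_{n-i}\}$, the quantity $|\Des(\sg)\cap\{d_\ell,\dots,n-1\}|=n-i-\ell+1$ is just the rank of $d_\ell$ counted from the right, so choosing an $(n-k)$-subset $S$ of $\Des(\sg)$ is the same as choosing an $(n-k)$-subset $R$ of $\{1,\dots,n-i\}$, and the inner sum becomes $q^{\maj(\sg)}\sum_{R}q^{-\sum_{r\in R}r}$. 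This reduces the entire problem to the standard $q$-binomial generating function for subset-sums.

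Then I would invoke (or quickly re-derive, via the substitution $r\mapsto n-i+1-r$) the identity
\[
\sum_{\substack{R\subseteq\{1,\dots,n-i\}\\ |R|=n-k}} q^{-\sum_{r\in R}r}=q^{\binom{n-k+1}{2}-(n-k)(n-i+1)}\qbinom{n-i}{n-k},
\]
together with the observation $\binom{n-k+1}{2}-(n-k)(n-i+1)=\binom{n-k}{2}-(n-k)(n-i)$, which is precisely the exponent in the statement. Summing $q^{\maj(\sg)}$ over all $\sg$ with $\des(\sg)=n-i$ produces exactly $\eul{n}{n-i}(q)$ by the definition of the Euler-Mahonian numbers, and summing over $i$ yields the claimed formula.

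The routine parts are the two elementary identities (the subset-sum $q$-binomial evaluation and the exponent simplification), both of which are short. The main conceptual step — and the place I would be most careful — is the translation in the middle paragraph: verifying that the first definition of $\op{\maj}$ turns the choice of starred descents into a clean $(n-k)$-subset of $\{1,\dots,\des(\sg)\}$ whose weight is governed by $q$ raised to the \emph{negative} of the subset-sum. Getting the direction of the ranks (right-to-left) and hence the sign in the exponent correct is exactly what forces the reversal $r\mapsto n-i+1-r$ and accounts for the asymmetric-looking exponent $\binom{n-k}{2}-(n-k)(n-i)$ in the final formula.
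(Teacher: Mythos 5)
Your proof is correct and takes essentially the same approach as the paper: both classify $(\sg,S)\in\S^{>}_{n,n-k}$ by $\des(\sg)=n-i$ with $i\in\{1,\ldots,k\}$, pull out $\eul{n}{n-i}(q)$ as the major-index generating function over that descent class, and evaluate the sum over the $\binom{n-i}{n-k}$ choices of starred descents as the coefficient of $x^{n-k}$ in $\prod_{j=1}^{n-i}\left(1+\frac{x}{q^{j}}\right)$, which is $q^{\binom{n-k}{2}-(n-k)(n-i)}\qbinom{n-i}{n-k}$. The only difference is expository: you spell out the right-to-left rank translation and the subset-sum $q$-binomial identity that the paper compresses into a single appeal to the $q$-binomial theorem.
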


\begin{proof}
We will build a general element $(\sg, S) \in \S^{>}_{n, n-k}$ in a way that exhibits the identity. First we pick the number of ascents of $\sg$. Clearly, 
since we have $n-k$ starred descents, the number of ascents 
must be an element of $\{0, \ldots, k-1\}$. Say that 
$\sg$ has $i-1$ ascents, where $i$ must be in $\{1, \ldots, k\}$. 
Then $\sg$ can be any permutation in $\S_{n}$ with $n-i$ descents. The polynomial $\eul{n}{n-i}(q)$ considers all these possibilities while $q$-counting the major index of the permutation.

Next we must place stars at $n - k$ of the $n- i$ descents in $\sg$. By the definition of $\op{\maj}((\sg, S))$ and the $q$-binomial 
theorem, this choice yields the factor
\begin{align*}
\prod_{j=1}^{n-i} \left. \left(1 + \frac{x}{q^{j}} \right) \right|_{x^{n-k}} = q^{\binom{n-k}{2} - (n-k)(n-i)} \qbinom{n-i}{n-k}.
\end{align*}
\end{proof}

\subsection{Extending Other Mahonian Statistics}
\label{ssec:extensions}
Inversion number and major index are just two of many Mahonian statistics on $\S_{n}$, and it is natural to wonder which of these statistics we can extend to $\osp{n}{k}$ using the methods we have developed. In this section, we will 
 apply a class of well-known bijections to the underlying permutations to obtain generalizations of the statistics $\coinv$, $\comaj$, $\rlmaj$, and $\rlcomaj$. We show that all of these new statistics are equidistributed with $\op{\inv}$ and $\op{\maj}$ on $\S^{>}_{n,k}$.

On $\sigma \in \S_{n}$, we consider the statistics
\begin{align*}
\coinv(\sigma) &= \{1 \leq i < j \leq n: \sigma_{i} < \sigma_{j} \} \\
\comaj(\sigma) &= \sum_{i \in \Asc(\sigma)} i \\
\rlmaj(\sigma) &= \sum_{i \in \Des(\sigma)} n-i \\
\rlcomaj(\sigma) &= \sum_{i \in \Asc(\sigma)} n-i .
\end{align*}
These are known as the \emph{number of coinversions}, \emph{comajor index}, \emph{right-left major index}, and \emph{right-left comajor index}, respectively. Each of these is the image of the inversion number or the major index under one of three simple bijections on $\S_{n}$, sometimes called the trivial bijections: reverse (which sends $\sigma_{i}$ to $\sigma_{n+1-i}$), complement (which replaces $i$ with $n+1-i$), and reverse complement (which is the composition of reverse and complement.) We describe the precise actions of these bijections on the inversion number and major index in the following table. For example, the $\rlcomaj$ entry means that the major index of a permutation is equal to the $\rlcomaj$ of the reverse of that permutation.

\begin{center}
\begin{tabular}{c||c|c|c}
& reverse & complement & reverse complement \\ \hline\hline
$\inv$ & $\coinv$ & $\coinv$ & $\inv$ \\ \hline
$\maj$ & $\rlcomaj$ & $\comaj$ & $\rlmaj$
\end{tabular}
\end{center}

The trivial bijections tell us how to extend these statistics to ordered set partitions. Namely, for an ascent-starred or descent-starred permutation $(\sg, S)$, we apply the bijections to the underlying permutation $\sg$ and reflect $S$ if necessary. For coinversion number, this results in the statistic
\begin{align*}
\op{\coinv}((\sg, S^{\Des})) &= \coinv(\sg) -  \sum_{i \in S^{\Des}} \inv^{i, \Box}(\sg) \\
\op{\coinv}((\sg, S^{\Asc}))&= \coinv(\sg) - \sum_{i \in S^{\Asc}} 1 + \inv^{i+1, \Box}(\sg)
\end{align*}
for $(\sg, S^{\Des}) \in \S^{>}_{n}$, $(\sg, S^{\Asc}) \in \S^{<}_{n}$. We have used superscripts to help keep track of whether each element is an ascent-starred or descent-starred permutation. Combinatorially, this counts the number of $i < j$ such that $i$'s block is to the left of $j$'s block and $i$ is the minimal element in its block\footnote{This is the $\operatorname{los}$ statistic on ordered set partitions in \cite{stein}.}.

To extend the comajor index, right-left major index, and right-left comajor index to starred permutations, we apply the complement, reverse complement, and reverse bijections, respectively, to a descent-starred permutation. This results in
\begin{align*}
\op{\comaj}((\sg, S^{\Asc})) &= \comaj(\sg) - \sum_{i \in S^{\Asc}} |\Asc(\sg) \cap \{i, i+1, \ldots, n-1\}| \\
\op{\rlmaj}((\sg, S^{\Des})) &= \rlmaj(\sg) - \sum_{i \in S^{\Des}} |\Des(\sg) \cap \{1, 2, \ldots, i\}| \\
\op{\rlcomaj}((\sg, S^{\Asc})) &= \rlcomaj(\sg) - \sum_{i \in S^{\Asc}} |\Asc(\sg)) \cap \{1, 2, \ldots, i\}| .
\end{align*}
It follows from the trivial bijections that each of these statistics is equidistributed with $\op{\inv}$ and $\op{\maj}$ on the relevant starred permutations. It is an interesting open problem to investigate how other Mahonian statistics, such as Denert's statistic, may be extended to ordered set partitions.

\subsection{$p,q$-analogues}
\label{ssec:pq}
Define the standard $p,q$-analogues of $k$ and $k!$ by 
\begin{align*}
[k]_{p, q} &= p^{k-1} + p^{k-2}q + \ldots + pq^{k-2} + q^{k-1} \ \mbox{and} \\
[k]_{p, q}! &=[1]_{p, q} [2]_{p, q} \cdots [k]_{p, q}.
\end{align*}

Wachs and White \cite{ww} defined a $p,q$-analogue of the Stirling 
numbers of the second kind $S_{n,k}(p,q)$ by defining the weight 
$w_{p,q}(P)$ of a rook placement $P \in \mathcal{N}_{n-k}(B_n)$ 
as follows. First, each rook cancels all the cells in its row 
to its right plus the cell that it is in. Then we let 
$w_{p,q}(P) = q^{\uncb(P)}p^{\unca(P)}$ where $\uncb(P)$ is the number 
of uncanceled cells that lie below a rook in $P$ and $\unca(P)$ is the number 
of uncanceled cells that lie above a rook in $P$. For example, 
if $P$ is the rook placement pictured on the top left 
in Figure \ref{fig:pq}, we have placed $q$s in cells that contribute 
to $\uncb(P)$ and $p$s in cells that contribute to $\unca(P)$ so 
that $w_{p,q}(P) = p^2q^3$.  Then 
we define $S_{n,k}(p,q) = \sum_{P \in \mathcal{N}_{n-k}(B_n)} w_{p,q}(P)$. 
One can show that the $S_{n,k}(p,q)$ can also be defined by 
the recursions 
\begin{equation}\label{recSnkpq}
S_{n+1,k}(p,q)  = S_{n,k-1}(p,q) +[k]_{p,q}S_{n,k}(p,q).
\end{equation}
with initial conditions that  $S_{0,0}(p,q) =1$ and $S_{n,k}(p,q) =0$ 
if either $k < 0$ or $k > n$.  For example, one can prove 
(\ref{recSnkpq}) by classifying the non-attacking rook placements 
in $\mathcal{N}_{n+1-k}(B_{n+1})$ by according to whether there is 
a rook in the last column.  That is, if $P$ does not have a rook 
in the last column, then $P$ must have $n+1-k$ rooks in $B_n$ so 
that the contribution of such rook placements to $S_{n+1,k}(p,q)$ is 
$$\sum_{P \in \mathcal{N}_{n-(k-1)}(B_n)} w_{p,q}(P) = S_{n,k-1}(p,q).$$
If $P$ does have a rook in the last column, then it has 
$n -k$ rooks in $B_n$ which cancel $n-k$ cells in the last column. 
Thus there are $k$ uncanceled cells in the last column and 
if we place the rook in the last column 
in the $i$th uncanceled cell, reading from top to bottom, 
then we will get a contribution of $q^{i-1}p^{k-1-i}$ to the 
weight of $P$. As a result, the contribution of such rook placements 
to $S_{n+1,k}(p,q)$ is 
$$(p^{k-1} +p^{k-2}q + \cdots + pq^{k-2} +q^{k-1})
\sum_{P \in \mathcal{N}_{n-k}(B_n)} w_{p,q}(B) = [k]_{p,q}S_{n,k}(p,q).$$

\fig{pq}{$p,q$-weights of rook placements, file placements, and 
mixed placements}

We obtain a second $p,q$-analogue of the $S_{n,k}$ by setting  
$$\tilde{S}_{n,k}(p,q) = p^{\binom{n}{2}-\binom{k}{2}}S_{n,k}(q/p).$$ 
We can also give a combinatorial interpretation to  $\tilde{S}_{n,k}(p,q)$.
That is, we know that if we place $n-k$ non-attacking rooks in 
$B_n$, then in the empty columns, we will have shown that 
the number of uncanceled cells is $1, \ldots, k-1$ as we read 
from left to right.  Thus we can view $p^{\binom{n}{2} -\binom{k}{2}}$ 
as placing a weight of $p$ in every cell except those uncanceled 
cells in the empty columns. Then to account for 
the factor $S_{n,k}(q/p)$, we must multiply the weight 
of any uncanceled cell that lies below a rook by $q/p$ which 
will give it an effective weight of $q$. Thus the weight of 
such a placement will be $W_{p,q}(P) = q^{\uncb(P)}p^{\unca(P)}p^{\can(P)}$ 
where $\can(P)$ is the number of canceled cells of $P$. For example, 
For example, if $P$ is the rook placement pictured on the top left 
in Figure \ref{fig:pq}, there are 6 canceled cells so that 
$W_{p,q}(P) = q^3 p^8$. Then 
\begin{equation}
\tilde{S}_{n,k}(p,q) = p^{\binom{n}{2}-\binom{k}{2}}S_{n,k}(q/p) = 
\sum_{P \in \mathcal{N}_{n-k}(B_n)}W_{p,q}(P).
\end{equation}
The same argument that we used to show that 
the recursion (\ref{recSnkpq}) holds will show that 
\begin{equation}\label{rectildeSnkpq}
\tilde{S}_{n+1,k}(p,q) = p^{n+1-k}\tilde{S}_{n,k-1}(p,q)+p^{n+1-k}[k]_{p,q}\tilde{S}_{n,k}(p,q)
\end{equation}
because there are always $n+1-k$ canceled cells in the last 
column for any $P \in \mathcal{N}_{n+1-k}(B_{n+1})$.

Given a file placement $F \in \mathcal{F}(St_n)$, we also 
define $w_{p,q}(F) = q^{\uncb(P)}p^{\unca(P)}$. That is 
the only cells that get canceled in a file placement are 
the cells that contain rooks so that in 
$w_{p,q}(F)$ we are counting a factor of $p$ for every cell that lies 
above a rook and a factor of $q$ for every cell that lies below a rook. 
For example, if $F$ is the file placement pictured on the top right
in Figure \ref{fig:pq}, $w_{p,q}(F) = q^2p^6$.  
It is then easy to see that a rook in a row $j$ and column 
$i$ of $St_n$ contributes $q^{j-1}p^{i-j}$ so that the set of possible 
placements of a rook 
in column $i$ contributes $p^{i-1}+ q p^{i-2}+ \cdots + pq^{i-2}+q^{i-1} = 
[i]_{p,q}$ to $\sum_{F \in \mathcal{F}_n(St_n)} w_{p,q}(F)$. Thus 
\begin{equation}
[n]_{p,q}! = \sum_{F \in \mathcal{F}_n(St_n)} w_{p,q}(F).
\end{equation}

We can similarly define the weight $w_{p,q}(P)$ of a mixed placement 
$P \in \mathcal{M}_{n,k}$ by setting 
$w_{p,q}(P) = q^{\unca(P)} p^{\unca(P)}$, where each 
non-attacking rook or file rook cancels its cell and each 
non-attacking rook cancels all the cells in its row to its right plus 
the cell in its column in the first row. For example, 
if $P$ is the mixed placement pictured in the bottom row 
of Figure \ref{fig:pq}, then we have placed a $q$ in each cell 
counted by $\uncb(P)$ and $p$ in each cell counted by $\unca(P)$ 
so that $w_{p,q}(P) = q^6p^7$. It follows 
from our arguments above that 
\begin{equation}
 D_{\mathcal{M}_{n,k}}^{w_{p,q}} = \sum_{P \in \mathcal{M}_{n,k}}w_{p,q}(P)
= [n-k]_{p,q}!S_{n,n-k}(p,q).
\end{equation}
In addition, by classifying the mixed placements $P \in 
\mathcal{M}_{n,k}$ according to whether the rook in last 
column is a file rook or a non-attacking rook one can show 
that 
\begin{equation}
D_{\mathcal{M}_{n,k}}^{w_{p,q}} =
[n-k]_{p,q} D_{\mathcal{M}_{n-1,k-1}}^{w_{p,q}} + 
[n-k]_{p,q} D_{\mathcal{M}_{n-1,k}}^{w_{p,q}}.
\end{equation}

We now are in position to give two different 
$p,q$-analogues of Theorem \ref{thm:main}. That is,  
in Theorem \ref{thm:main}, we can replace $q$ by $q/p$ and 
then multiply by $p^{\binom{n}{2}}$. 
Then if we observe that 
$p^{-\binom{k}{2}} [k]_{p,q}! = [k]_{q/p}!$ 
and $p^{\coinv(\sg)}q^{\inv(\sg)} = p^{\binom{n}{2}} 
(q/p)^{\inv(\sg)}$ for any $\sg \in \S_n$, we see that 
\begin{align*}
&\sum_{k=1}^n [k]_{p,q}!\tilde{S}_{n,k}(p,q) z^{n-k} \\
&=\sum_{\sigma \in \S_{n}} p^{\coinv(\sg)} q^{\inv(\sigma)} 
\prod_{j \in \Des(\sigma)} \left(1 + \frac{z}{(q/p)^{1 + \inv^{\Box, j}(\sigma)}} \right) \\
&=\sum_{\sigma \in \S_{n}} p^{\coinv(\sg)} q^{\inv(\sigma)} \prod_{j \in \Asc(\sigma)} \left(1 + \frac{z}{(q/p)^{\inv^{\Box, j}(\sigma)}} \right) \\
&= \sum_{\sigma \in \S_{n}} p^{\binom{n}{2}- \maj(\sigma)} q^{\maj(\sigma)} \prod_{j = 1}^{\des(\sigma)} \left(1 + \frac{z}{(q/p)^{j}} \right)
\end{align*}

Alternatively, a more subtle $p,q$-analogue of Theorem 
\ref{thm:main} is the following:
\begin{align*}
&\sum_{\sigma \in \S_{n}} p^{\coinv(\sigma)} q^{\inv(\sigma)} \prod_{j \in \Des(\sigma)} \left(1 + \frac{z}{p^{\coinv^{j, \Box}(\sigma)}q^{1 + \inv^{\Box, j}(\sigma)}} \right) \\
= &\sum_{\sigma \in \S_{n}}p^{\coinv(\sigma)} q^{\inv(\sigma)} \prod_{j \in \Asc(\sigma)} \left(1 + \frac{z}{p^{1 + \coinv^{j, \Box}(\sigma)}q^{\inv^{\Box, j}(\sigma)}} \right) \\
= &\sum_{k=0}^{n-1} \dist^{\op{\coinv}, \op{\inv}}_{\S^{>}_{n, k}}(p, q) z^{k} \\
= &\sum_{k = 1}^{n} [k]_{p, q}! \qstir{n}{k}{p, q} z^{n-k}. 
\end{align*}

The proofs of these identities are very similar to the proof 
of Haglund's conjecture. That is, it follows from our remarks above 
that  
\begin{align*}
&\sum_{\sigma \in \S_{n}} p^{\coinv(\sigma)} q^{\inv(\sigma)} \prod_{j \in \Des(\sigma)} \left(1 + \frac{z}{p^{\coinv^{j, \Box}(\sigma)}q^{1 + \inv^{\Box, j}(\sigma)}} \right) \\
= &\sum_{\sigma \in \S_{n}}p^{\coinv(\sigma)} q^{\inv(\sigma)} \prod_{j \in \Asc(\sigma)} \left(1 + \frac{z}{p^{1 + \coinv^{j, \Box}(\sigma)}q^{\inv^{\Box, j}(\sigma)}} \right) \\
= &\sum_{k=0}^{n-1} \dist^{\op{\coinv}, \op{\inv}}_{\S^{>}_{n, k}}(p, q) z^{k}. \end{align*}
Thus we must show that 
\begin{equation*}
\sum_{k=0}^{n-1} \dist^{\op{\coinv}, \op{\inv}}_{\S^{>}_{n, k}}(p, q) z^{k} \\
=\sum_{k = 1}^{n} [k]_{p, q}! \qstir{n}{k}{p, q} z^{n-k} .
\end{equation*}
This requires only that we show that 
\begin{equation}\label{eq:keypq}
\dist^{\op{\coinv}, \op{\inv}}_{\S^{>}_{n, k}}(p, q) =
[n-k]_{p,q}\dist^{\op{\coinv}, \op{\inv}}_{\S^{>}_{n-1, k-1}}(p, q) + 
[n-k]_{p,q}\dist^{\op{\coinv}, \op{\inv}}_{\S^{>}_{n-1, k}}(p, q).
\end{equation}
To prove (\ref{eq:keypq}), we need only show that 
our $\inv$-bar and $\inv$-star insertions cooperate 
with our coinversion statistic. That is, we must prove the following 
lemma. 
\begin{lemma}
\label{lemma:osp-invpq}
\leavevmode
\begin{itemize}
\item For $(\sg, S) \in \S^{>}_{n-1, k}$, $\op{\coinv}(\phi \insertion{|}{\op{\inv}}{n, k}(i, (\sg, S))) = \op{\coinv}((\sg, S)) + n-k-1-i$.
\item For $(\sg, S) \in \S^{>}_{n-1, k-1}$, $\op{\coinv}(\phi \insertion{*}{\op{\inv}}{n, k}(i, (\sg, S))) = \op{\coinv}((\sg, S)) + n-k-1-i$.
\end{itemize}
\end{lemma}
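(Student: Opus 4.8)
The plan is to prove both statements of Lemma \ref{lemma:osp-invpq} in parallel with the proof of Lemma \ref{lemma:osp-inv}, exploiting the fact that the $\inv$-labeling already encodes exactly the information we need. Recall that for a bar insertion into $(\sg,S) \in \S^{>}_{n-1,k}$, the labeled positions are those following an unstarred element (plus the leftmost position), and they receive the labels $0, 1, \ldots, n-k-1$ from right to left. Inserting $n$ at the position labeled $i$ creates $i$ new inversions, namely one with each of the $i$ unstarred elements lying to the right of the insertion point; this is precisely why $\op{\inv}$ increases by $i$. The key observation is that since $n$ is the new maximum, it forms a \emph{coinversion} with every element lying to its right, and a potential coinversion with every element to its left.

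The main step is to track the effect of the insertion on $\op{\coinv}$. First I would recall that $\op{\coinv}((\sg,S))$ counts pairs $i<j$ with $\sg_i < \sg_j$ where $\sg_i$ is minimal (rightmost) in its block, equivalently $i$ is unstarred-at-the-right in the descent-starred picture. Since $n$ is larger than everything, inserting $n$ creates new coinversions only of the form $(p, \text{position of } n)$ where $p$ lies strictly to the left of $n$ and $\sg_p$ is minimal in its block; crucially, $n$ itself sits at the end of a new singleton block (for bar insertion, no new star), so $n$ is minimal in its own block and hence contributes coinversions with every larger element to its right — but there are none, since $n$ is the maximum. Therefore the only new coinversions are those counting unstarred (right-minimal) elements strictly to the \emph{left} of the insertion point. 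The heart of the calculation is then a counting identity: the number of such elements to the left equals $n-k-1-i$, because the total number of labeled (i.e.\ right-minimal/unstarred) positions available for insertion is $n-k$, these are labeled $0, 1, \ldots, n-k-1$ from right to left, and inserting at label $i$ leaves exactly $i$ unstarred elements to the right and hence $n-k-1-i$ unstarred elements to the left. This gives the first bullet.

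For the second bullet, the star-insertion labeling skips the rightmost position and assigns $0, 1, \ldots, n-k-1$ from right to left to the remaining labeled positions, so each star-insertion label is one less than the bar-insertion label at the same physical location. Since inserting $n$ as a \emph{starred} descent means $n$ is no longer minimal in its block — it now shares a block with the element to its right — I would check that $n$ no longer contributes a coinversion with its right-neighbor's block-minimum, exactly as in the inversion case where star insertion produces one fewer inversion. Combined with the off-by-one shift in the labels, the count of right-minimal elements strictly to the left of the insertion point is again $n-k-1-i$, yielding $\op{\coinv}(\phi\insertion{*}{\op{\inv}}{n,k}(i,(\sg,S))) = \op{\coinv}((\sg,S)) + n-k-1-i$.

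The main obstacle I anticipate is bookkeeping the \emph{minimality} condition carefully: $\op{\coinv}$ only counts coinversions whose smaller element is block-minimal, so I must verify that inserting $n$ (a) does not disturb the block-minimality status of any previously existing element under bar insertion, and (b) under star insertion, changes the block structure only by merging $n$ into its right-neighbor's block, without altering which element is minimal there. Once this is pinned down, the equality $n-k-1-i$ follows from the clean complementary relationship between the $\inv$-labels (counting unstarred elements to the right) and the quantity we want (counting right-minimal elements to the left), which totals $n-k-1$ non-inserted labeled positions in both cases. This complementarity is the conceptual reason the $p$-exponent is the ``reverse'' of the $q$-exponent, mirroring how $\coinv$ is the reverse of $\inv$ under the trivial bijections.
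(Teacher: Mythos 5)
Your proposal follows the same route as the paper's own (very short) proof: the heart of both is your first-bullet computation --- inserting $n$ at bar-label $i$ creates new coinversions exactly with the unstarred (block-minimal) elements strictly to its left, of which there are $n-k-1-i$, while leaving all previously existing coinversions and all block-minimality statuses untouched --- followed by the observation that star-insertion labels are shifted by one from bar-insertion labels. Your first bullet, and your explicit check that insertion does not disturb which elements are block-minimal, are correct and complete.

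One step in your star-insertion argument, however, is false, and if it were actually fed into the arithmetic it would break the count. You assert that when $n$ is inserted with a star, ``$n$ no longer contributes a coinversion with its right-neighbor's block-minimum, exactly as in the inversion case where star insertion produces one fewer inversion.'' There is no such coinversion to lose: $n$ is the maximum and sits to the \emph{left} of that block-minimum, so the pair in question is an inversion, never a coinversion counted by $\op{\coinv}$ (which requires the smaller element to be on the left and block-minimal). Consequently, and in contrast with Lemma \ref{lemma:osp-inv}, star insertion at a given physical position creates exactly the \emph{same} number of new coinversions as bar insertion at that position would; the block merge kills nothing. The formula $n-k-1-i$ still comes out right for the second bullet, but for purely bookkeeping reasons: the source $(\sg, S) \in \S^{>}_{n-1, k-1}$ has $n-k$ unstarred elements (one more than the source in the first bullet), and star-label $i$ occupies the physical position of bar-label $i+1$, so there are $i+1$ unstarred elements to the right of $n$ and hence $n-k-(i+1) = n-k-1-i$ to its left. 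If one instead keeps your positional count \emph{and} subtracts an additional ``lost coinversion,'' one gets $n-k-2-i$, which contradicts the lemma. So delete that sentence and let the positional count stand on its own; the inversion/coinversion asymmetry here is exactly the point where the two lemmas differ. (In fairness, the paper's own proof recycles the phrase ``one less coinversion is created'' from Lemma \ref{lemma:osp-inv}, which read literally at a fixed physical position is inaccurate in the same way; the clean statement is the bookkeeping argument above.)
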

\begin{proof}
To prove the first statement, we notice that inserting $n$ at the position that received the label $i$ for bar insertion creates $n-k-1-i$ new coinversions (between $n$ and all the unstarred elements to its left) and does not affect any of the previous coinversions. The same is true for star insertion, since each star insertion label is one less than the bar insertion label at the same position, and one less coinversion is created.
\end{proof}

\begin{prop}
\label{prop:osp-inv-distpq}
The polynomial $\dist^{\op{\coinv},\op{\inv}}_{\S^{>}_{n,k}}(p,q)$ equals 1 when $k = n-1$, 0 when $k < 0$ or $k > n-1$, and
\begin{align*}
\dist^{\op{\coinv},\op{\inv}}_{\S^{>}_{n,k}}(p,q) &= [n-k]_{p,q} \dist^{\op{\coinv}, \op{\inv}}_{\S^{>}_{n-1,k}}(p,q) + [n-k]_{p,q} \dist^{\op{\coinv}, \op{\inv}}_{\S^{>}_{n-1,k-1}}(p,q)
\end{align*}
otherwise. 
\end{prop}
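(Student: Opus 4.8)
The plan is to mimic the proof of Proposition~\ref{prop:osp-inv-dist} verbatim, now carrying the pair of statistics $(\op{\coinv},\op{\inv})$ rather than $\op{\inv}$ alone. First I would dispose of the boundary cases: $\S^{>}_{n,n-1}$ consists of the single fully-starred decreasing permutation $n(n-1)\cdots 1$, which corresponds to the one-block ordered set partition, so both $\op{\inv}$ and $\op{\coinv}$ vanish and $\dist^{\op{\coinv},\op{\inv}}_{\S^{>}_{n,n-1}}(p,q) = 1$; and $\S^{>}_{n,k}$ is empty when $k<0$ or $k>n-1$, so the distribution is $0$ there.

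For the recursion, I would reuse the fact (established just before Proposition~\ref{prop:osp-inv-dist}) that the bar- and star-insertion maps $\phi\insertion{|}{\op{\inv}}{n,k}$ and $\phi\insertion{*}{\op{\inv}}{n,k}$ are injections whose images are, respectively, the elements of $\S^{>}_{n,k}$ in which $n$ is unstarred and those in which $n$ is starred, and that these two images partition $\S^{>}_{n,k}$. Splitting the sum defining $\dist^{\op{\coinv},\op{\inv}}_{\S^{>}_{n,k}}(p,q)$ along this partition lets me pull each summand back along the appropriate insertion map and sum over the label $i \in \{0,1,\dots,n-k-1\}$ and over the smaller-index set ($\S^{>}_{n-1,k}$ for bar insertion, $\S^{>}_{n-1,k-1}$ for star insertion).

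The crux is that both statistics change in a way that depends only on the label $i$ and not on the underlying $(\sg,S)$. Lemma~\ref{lemma:osp-inv} gives $\op{\inv}\mapsto\op{\inv}+i$, while Lemma~\ref{lemma:osp-invpq} gives $\op{\coinv}\mapsto\op{\coinv}+(n-k-1-i)$, under each insertion. Hence the joint weight of an inserted object is $p^{\op{\coinv}((\sg,S))}q^{\op{\inv}((\sg,S))}\cdot p^{n-k-1-i}q^{i}$; the weight factors, and the inner sum over labels collapses via
\begin{align*}
\sum_{i=0}^{n-k-1} p^{n-k-1-i}q^{i} = p^{n-k-1} + p^{n-k-2}q + \cdots + q^{n-k-1} = [n-k]_{p,q}.
\end{align*}
Carrying out this computation on each piece of the partition produces the two terms $[n-k]_{p,q}\dist^{\op{\coinv},\op{\inv}}_{\S^{>}_{n-1,k}}(p,q)$ and $[n-k]_{p,q}\dist^{\op{\coinv},\op{\inv}}_{\S^{>}_{n-1,k-1}}(p,q)$, as desired.

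I do not anticipate a genuine obstacle: the entire content of the $p,q$-refinement is packaged in Lemma~\ref{lemma:osp-invpq}, which is already verified, and the only new observation is that pairing it with Lemma~\ref{lemma:osp-inv} makes the label-dependent factor collapse to the symmetric $p,q$-integer $[n-k]_{p,q}$ rather than the $q$-integer $[n-k]_{q}$ of the single-variable case. The one point requiring a moment of care is bookkeeping the exponent $n-k-1-i$ so that, as $i$ ranges over $\{0,\dots,n-k-1\}$, the powers of $p$ and $q$ run in opposite directions and yield a balanced $p,q$-integer rather than a lopsided sum.
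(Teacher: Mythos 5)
Your proposal is correct and follows essentially the same route as the paper: the paper's proof likewise splits $\S^{>}_{n,k}$ according to whether $n$ is starred, pulls each piece back through $\phi \insertion{|}{\op{\inv}}{n,k}$ or $\phi \insertion{*}{\op{\inv}}{n,k}$, and invokes Lemmas \ref{lemma:osp-inv} and \ref{lemma:osp-invpq} so that the label sum collapses to $[n-k]_{p,q}$. The only difference is cosmetic: you also verify the initial conditions (which the paper leaves implicit), and this checks out since the unique element of $\S^{>}_{n,n-1}$ is the one-block partition, on which both statistics vanish.
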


\begin{proof}
Our proof is very similar to the proof of 
Proposition \ref{prop:osp-inv-dist}.
By Lemmas \ref{lemma:osp-inv} and \ref{lemma:osp-invpq}, we 
have that 
\begin{align*}
\dist^{\op{\coinv, \inv}}_{\S^{>}_{n,k}}(q) &= \sum_{(\tau, T) \in \S^{>}_{n,k}} p^{\op{\coinv}((\tau, T))}q^{\op{\inv}((\tau, T))} \\
&= \sum_{\stackrel{(\tau, T) \in \S^{>}_{n,k}}{n \ \text{is not starred in } (\tau, T)}} p^{\op{\coinv}((\tau, T))} q^{\op{\inv}((\tau, T))} + \\
& \ \ \ \  \sum_{\stackrel{(\tau, T) \in \S^{>}_{n,k}}{n \text{ is starred in } (\tau, T)}} 
p^{\op{\coinv}((\tau, T))} q^{\op{\inv}((\tau, T))} \\
&= \sum_{i=0}^{n-k-1} \sum_{(\sg, S) \in \S^{>}_{n-1, k}} 
p^{\op{\coinv}(\phi \insertion{|}{n}{k}(i, (\sg, S)))} q^{\op{\inv}(\phi \insertion{|}{n}{k}(i, (\sg, S)))} + \\
& \ \ \ \ \sum_{i=0}^{n-k-1} \sum_{(\sg, S) \in \S^{>}_{n-1, k}} 
p^{\op{\coinv}(\phi \insertion{*}{n}{k}(i, (\sg, S)))} q^{\op{\inv}(\phi \insertion{*}{n}{k}(i, (\sg, S)))} \\
&= \sum_{i=0}^{n-k-1} \sum_{(\sg, S) \in \S^{>}_{n-1, k}} 
p^{\op{\coinv}((\sg, S)) + n-k-1-i}
q^{\op{\inv}((\sg, S)) + i} + \\
& \ \ \ \ \sum_{i=0}^{n-k-1} \sum_{(\sg, S) \in \S^{>}_{n-1, k-1}} p^{\op{\coinv}((\sg, S)) + n-k-1-i} q^{\op{\inv}((\sg, S)) + i} \\
&= [n-k]_{p,q} \dist^{\op{\coinv,\inv}}_{\S^{>}_{n-1, k}}(p,q) + [n-k]_{p,q} \dist^{\op{\coinv,\inv}}_{\S^{>}_{n-1, k}}(p,q) .
\end{align*}
\end{proof}

One may notice that the major index is absent from our second
$p,q$-analogue.  
We can define a companion $\overline{\maj}$ for $\maj$ in 
$\S^{>}_{n, k}$ which would give us a pair of statistics 
on $\S^{>}_{n, k}$ such that 
\begin{equation*}
\sum_{k=0}^{n-1} \dist^{\op{\overline{\maj}}, \op{\maj}}_{\S^{>}_{n, k}}(p, q) z^{k} =\sum_{k = 1}^{n} [k]_{p, q}! \qstir{n}{k}{p, q} z^{n-k}.
\end{equation*}
All we have to do is to ensure that we define 
$\overline{maj}$ so that the following lemma holds. 
\begin{lemma}
\label{lemma:osp-majpq}
\leavevmode
\begin{itemize}
\item For $(\sg, S) \in \S^{>}_{n-1, k}$, $\op{\overline{\maj}}(\phi \insertion{|}{\op{\maj}}{n, k}(i, (\sg, S))) = \op{\overline{\maj}}((\sg, S)) + n-k-i -i$.
\item For $(\sg, S) \in \S^{>}_{n-1, k-1}$, $\op{\overline{\maj}}(\phi \insertion{*}{\op{\maj}}{n, k}(i, (\sg, S))) = 
\op{\overline{\maj}}((\sg, S)) + n-k-1-i$.
\end{itemize}
\end{lemma}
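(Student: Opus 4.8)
The plan is to define the companion statistic $\overline{\maj}$ by reversing the $\op{\maj}$-labelings of Subsection \ref{ssec:gen-lab} and then to establish the two insertion identities (with the intended increment $n-k-1-i$ in both bullets) in complete parallel to Lemma \ref{lemma:osp-maj}. Concretely, for $(\sg,S)\in\S^{>}_{n-1,k}$ I would introduce an $\overline{\maj}$-labeling of the bar-insertion positions that assigns to each position the complement $n-k-1-i$ of its $\op{\maj}$-label $i$, and a star-insertion $\overline{\maj}$-labeling obtained by skipping the rightmost position. I then define $\overline{\maj}$ so that inserting $n$ at the position carrying $\overline{\maj}$-label $j$ raises $\overline{\maj}$ by exactly $j$; equivalently, I define $\overline{\maj}((\sg,S))$ as the sum of the complementary labels read off along the unique insertion history of $(\sg,S)$ under $\phi\insertion{|}{\op{\maj}}{}$ and $\phi\insertion{*}{\op{\maj}}{}$. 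Because Proposition \ref{prop:osp-maj-dist} exhibits these two maps as injections with disjoint union equal to $\S^{>}_{n,k}$ and with explicitly invertible inverses, each $(\tau,T)$ has a unique predecessor and a unique $i$, so the recursion is well-defined. Thus $\overline{\maj}$ is an honest statistic on $\S^{>}_{n,k}$ and Lemma \ref{lemma:osp-majpq} holds essentially by construction.

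To make the definition concrete rather than purely recursive, I would then record a closed form for $\overline{\maj}$ and re-prove the first bullet directly, mirroring the proof of Lemma \ref{lemma:osp-maj}. The argument splits into the same cases: the space labeled $i$ may be the rightmost space, the start of the word, a space immediately after an unstarred descent, or a space immediately after an unstarred ascent. In each case one inserts $n$, carries out the star-moving step (moving every star to the right of $n$ one descent to its left), and checks the net change in $\overline{\maj}$. Since Lemma \ref{lemma:osp-maj} already certifies that the $\op{\maj}$-increment is $i$ in each case, it suffices to check that the increments of $\op{\maj}$ and $\overline{\maj}$ sum to the constant $n-k-1$ at every bar insertion. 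This is the exact analogue of the complementary increments of $\op{\inv}$ and $\op{\coinv}$ in Lemma \ref{lemma:osp-invpq}, where the total $n-k-1$ counts the $n-k-1$ unstarred elements that $n$ is inserted among.

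For the second bullet I would deduce the star-insertion identity from the bar-insertion identity exactly as in the proof of Lemma \ref{lemma:osp-maj}: each position receiving bar-label $i+1$ receives star-label $i$, the star-insertion procedure performs steps 1 and 2 and then stars the rightmost descent, and one accounts for how this final starring changes $\overline{\maj}$ relative to the bar case. Combined with Lemma \ref{lemma:osp-invpq} and Proposition \ref{prop:osp-inv-distpq}, the two bullets give, by the computation of Proposition \ref{prop:osp-inv-distpq} with $p$ now tracking $\overline{\maj}$, the recursion $\dist^{\op{\overline{\maj}},\op{\maj}}_{\S^{>}_{n,k}}(p,q)=[n-k]_{p,q}\,\dist^{\op{\overline{\maj}},\op{\maj}}_{\S^{>}_{n-1,k-1}}(p,q)+[n-k]_{p,q}\,\dist^{\op{\overline{\maj}},\op{\maj}}_{\S^{>}_{n-1,k}}(p,q)$, which is the recursion for $[n-k]_{p,q}!\,S_{n,n-k}(p,q)$ and hence yields the advertised second $p,q$-analogue.

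The main obstacle will be the star-moving step. As in Lemma \ref{lemma:osp-maj}, the subtlety of the $\op{\maj}$-insertion lives entirely in step 2, and the same delicacy reappears for $\overline{\maj}$: a naive ``complement of $\maj$,'' such as replacing $\maj(\sg)$ by $\comaj(\sg)$ in the formula for $\op{\maj}((\sg,S))$, fails because bar insertion shifts the stars and so alters the underlying star set, and the complementary count then does not transform by the predicted $n-k-1-i$. The definition of $\overline{\maj}$ must therefore be adapted to the moving stars, and the heart of the verification is confirming, case by case, that the complementary label survives the shift — precisely the computation that made the ascent case of Lemma \ref{lemma:osp-maj} the most involved.
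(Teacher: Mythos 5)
Your core argument---defining $\op{\overline{\maj}}$ recursively along the unique insertion history determined by $\phi \insertion{|}{\op{\maj}}{n, k}$ and $\phi \insertion{*}{\op{\maj}}{n, k}$, so that both bullets (with increment $n-k-1-i$) hold by construction---is exactly the paper's proof, and it is valid for the reason you cite: those maps are injections whose images are disjoint and cover $\S^{>}_{n, k}$. Your supplementary plan to record a closed form for $\op{\overline{\maj}}$ and re-prove the lemma directly is precisely the step the authors state they could not carry out (because of the moving stars), but since the lemma requires only the recursive definition, this does not create a gap.
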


This can be done by simply defining $\overline{\maj}$ by recursion 
so that our maj-bar and maj-star insertion have this property.  The 
problem here is to find a natural definition of this statistic 
which does not refer to the our labelings of spaces. We have 
not been able to find such a definition due to the complications  
that arise by the moving stars in $\maj$-bar and $\maj$-star insertion.

\subsection{Varying Rook Placements}
\label{ssec:beyond}
We mentioned earlier that the conditions about the bottom row in the set of rook placements $\rook_{n, k}$ are unnatural from a rook theoretic point of view. In this section, we see that we can remove these conditions and obtain a
variant of our main theorem that involves a different set of descent-starred permutations. Along the way, we see some how the rook theoretic point of view can help us obtain variations of our main result.

Define $\rook^{\prime}_{n, k}$ to be $\rook_{n, k}$ along with the additional placements that have some right-canceling rook in the bottom row. The natural adaptation of $\unc$ to this set of objects adds in the bottom-row squares below right-canceling rooks. We call this statistic $\unc^{\prime}$. From the rook placements, we observe
\begin{align}
\label{new-recursion}
\dist^{\unc^{\prime}}_{\rook^{\prime}_{n, k}}(q) &=  [n-k+1]_{q} \dist^{\unc^{\prime}}_{\rook^{\prime}_{n-1, k-1}}(q) + [n-k]_{q}  \dist^{\unc^{\prime}}_{\rook^{\prime}_{n-1, k}}(q) \\
&= [n-k]_{q}! \qstir{n+1}{n-k+1}{q} \nonumber.
\end{align}
We would like to adjust our descent-starred permutations and our statistics to obtain this recursion in that setting. These equalities imply that we must replace $\S^{>}_{n, k}$ with some larger set of objects. This set of objects we will consider is
\begin{align*}
\S^{\Falls}_{n, k} &= \{(\sg, S): \sg \in \S_{n}, \ S \subseteq \Des(\sg) \cup \{n\}, \ |S| = k\}.
\end{align*}
By appending a zero to the end of $\sg$ for each $(\sg, S) \in \S^{\Falls}_{n, k}$, we see that we can also think of $\S^{\Falls}_{n, k}$ as the set of permutations of $\{0, 1,\ldots, n\}$ that end in a zero and have $k$ descents starred.

We would like to adjust our statistics so that they match the recursion (\ref{new-recursion}) on $\S^{\Falls}_{n, k}$. In fact, essentially the same statistics work here as before. That is, for $(\sg, S) \in \S^{\Falls}_{n}$, we set
\begin{align*}
\Inv^{\prime}((\sg, S)) &= \{(i, j): 1 \leq i < j \leq n, ~ \sg_{i} > \sg_{j}, ~ j \notin S, ~ \{i, i+1, \ldots, j-1\} \not \subseteq S \} \\
\inv^{\prime}((\sg, S)) &= |\Inv^{\prime}((\sg, S))| = \inv(\sg) - \sum_{i \in S} \inv^{\Box, i}(\sg) .
\end{align*}
The slight difference from the $\S^{>}_{n, k}$ case comes from the fact that it is possible to insert a starred $n$ without creating any inversions, namely by inserting it as far right as possible. Similarly, we set
\begin{align*}
\maj^{\prime}((\sg, S)) &= \maj(\sg) - \sum_{i \in S} |\Des(\sg) \cap \{i, i+1, \ldots, n-1\}|
\end{align*}
for $(\sg, S) \in \S^{\Falls}_{n}$. We claim that the natural adjustments of our insertion procedure to these new objects yields the correct recursions, so we have
\begin{align*}
\dist^{\inv^{\prime}}_{\S^{\Falls}_{n, k}}(q) &= \dist^{\maj^{\prime}}_{\S^{\Falls}_{n, k}}(q) = \dist^{\unc^{\prime}}_{\rook^{\prime}_{n, k}}(q) .
\end{align*}
We can rewrite these identities as 
\begin{align*}
&\sum_{\sg \in \S_{n}} q^{\inv(\sg)} \prod_{i \in \Des(\sg) \cup \{n\}} \left( 1 + \frac{z}{q^{\inv^{\Box, i}(\sg)}} \right) \\
= &\sum_{\sg \in \S_{n}} q^{\maj(\sg)} \prod_{j=0}^{\des(\sg)} \left(1 + \frac{z}{q^{i}} \right) \\
= &\sum_{k=0}^{n} [n-k]_{q}! \qstir{n+1}{n-k+1}{q} z^{k} .
\end{align*}
The second and third lines are each clearly equal to $1+z$ times their corresponding terms in the $\S^{>}_{n, k}$ case. We can also make this $1+z$ term evident in the first line by considering the following. Given an element $(\sg, S) \in \S^{\Falls}_{n}$, we consider $(\sg, S)$ as a descent-starred permutation of $\{0, 1, \ldots, n\}$ that ends with a zero. We then reverse it, obtaining an ascent-starred permutation of $\{0, 1, \ldots, n\}$ that begins with a zero. We claim that tracing the inversion statistic through this process yields the identity
\begin{align*}
&\sum_{\sg \in \S_{n}} q^{\inv(\sg)} \prod_{i \in \Des(\sg) \cup \{n\}} \left( 1 + \frac{z}{q^{\inv^{\Box, i}(\sg)}} \right) \\
= &\sum_{\sg \in \S_{n}} q^{\inv(\sg)} \prod_{i \in \Asc(\sg) \cup \{0\}} \left( 1 + \frac{z}{q^{\inv{i, \Box}(\sg)}} \right) \\
= (1+z) &\sum_{\sg \in \S_{n}} q^{\inv(\sg)} \prod_{i \in \Asc(\sg)} \left( 1 + \frac{z}{q^{\inv{i, \Box}(\sg)}} \right)
\end{align*}
which indeed is $1+z$ times the corresponding term in our main theorem.

\subsection{Future Directions}
\label{ssec:final}

In this final section, we outline some possible 
generalizations and extensions of our work as well as some open questions.

First, Rawlings \cite{rawlings} 
defined a sequence of statistics on $\S_n$ called $\rmaj{r}$ defined 
for $1 \leq r \leq n$ which interpolates 
between the major index and inversion statistics. That is, for 
any $\sg \in S_n$, 
$\rmaj{1}(\sg)  = \maj(\sg)$ and $\rmaj{n}(\sg) = \inv(\sg)$.   
Rawlings went on to prove that $\rmaj{r}$ is Mahonian for any $r$. 
We plan to extend Haglund's conjecture to $\rmaj{r}$ in a future paper.

There are many other natural variations of our work from the rook theoretic point of view. In particular, various rook theory models have been developed to handle groups of colored permutations $C_{m} \wr \S_{n}$. Many mathematicians have worked to define and explore a suitable analogue of inversion number and major index to these more general groups, for example in \cite{flag}. The main 
problem here is that there is more than one way to find an 
analogue of Haglund's conjecture in this setting. This will also 
be the subject of future work.

As we mentioned in Section \ref{sec:insertion}, there is another, perhaps more well-studied, bijection between $\inv$ and $\maj$ in the $\S_{n}$ case due to Foata that is quite different from the method of Carlitz that we have used.  Is there a way to generalize Foata's insertion map to give a bijective proof of our identity? Or is Carlitz's method inherently more valuable in this setting?

If one considers $\S_{n}$ as the set of rearrangements of $\{1, 2, \ldots, n\}$, it is natural to ask what happens if we replace the underlying set $\{1, 2, \ldots, n\}$ with some other multiset. If $A$ is any multiset, 
we can replace summations over $\S_n$ in the statement of Haglund's conjecture 
by summations over all rearrangements of $A$. Haglund also conjectured 
that the resulting equality still holds. This has been 
proved by the second author \cite{multiset}. 
Such a result is especially interesting because of its connections to Macdonald polynomials and diagonal harmonics.  

The last common generalization of the symmetric group to consider is the class of reflection groups. Here we have a natural statistic that reduces to the inversion number on $\S_{n}$ (length), and even a notion of ordered set partitions, which makes the setting ideal for exploration.

\bibliographystyle{abbrv}
\bibliography{statistics}
\label{sec:biblio}

\end{document}